\title{The Lemmens--Seidel conjecture for base size $5$}
\author[]{Kiyoto Yoshino}
\address{Department of Information Science, Faculty of Science, Toho University, 2-2-1 Miyama, Funabashi, Chiba 274-8510, Japan}
\email{kiyoto.yoshino@is.sci.toho-u.ac.jp}
\setlist[enumerate,1]{label={\upshape(\roman*)}}
\numberwithin{equation}{section}
\newtheorem{lemma}{Lemma}[section]
\newtheorem{theorem}[lemma]{Theorem}
\newtheorem{proposition}[lemma]{Proposition}
\newtheorem{corollary}[lemma]{Corollary}
\theoremstyle{definition}
\newtheorem{definition}[lemma]{Definition}
\newtheorem{question}[lemma]{Question}
\newcommand{\R}{\mathbb{R}}
\newcommand{\Q}{\mathbb{Q}}
\newcommand{\Z}{\mathbb{Z}}
\newcommand{\cM}{\mathcal{M}}
\newcommand{\uI}{\textup{I}}
\newcommand{\uII}{\textup{II}}
\newcommand{\bu}{\mathbf{u}}
\newcommand{\bv}{\mathbf{v}}
\newcommand{\bx}{\mathbf{x}}
\newcommand{\bw}{\mathbf{w}}
\newcommand{\bff}{\mathbf{f}}
\begin{document}

\keywords{Equiangular lines, Seidel Matrices, Pillar method}
\subjclass[2020]{05C50}

\tikzset{
			e/.style={line width=1 pt,
				line join=round,
				line cap=round},
			slim/.style={
				circle,
				fill=black,
				draw=black,
				minimum size=5 pt,
				inner sep=0pt},
			wslim/.style={
				circle,
				draw=black,
				fill=white,
				minimum size=5 pt,
				inner sep=0pt
				},
			lodot/.style={loosely dotted,
				line width=2 pt	,
				line join=round,
				line cap=round}
}
\newcommand{\Dp}{
	\begin{tikzpicture}[		
		baseline=0pt,
		xscale=0.5,
		yscale=0.5]
		\draw [e] (0,0) node[slim, label=below:$1$] () {}
			--(1,0) node[slim, label=below:$2$] (b) {}
			--(2,0) node[slim, label=below:$3$] (c) {}
			--(3,0) node[slim, label=below:$4$] () {}
			--(4,0) node[slim, label=below:$5$] (f) {}
			(6,0) node[slim, label=below:$t-2$] (g) {}
			--(7,0) node[slim, label=right:$t-1$] () {}
			(b.center)--(1,1) node[slim, label=right:$t$] () {};
		\draw [lodot] (f.center)--(g.center);
	\end{tikzpicture}
}
\newcommand{\Ap}{
	\begin{tikzpicture}[		
		baseline=0pt,
		xscale=0.5,
		yscale=0.5]
		\draw [e] (0,0) node[slim] () {}
			--(1,0) node[slim] (b) {}
			--(2,0) node[wslim] (c) {}
			--(3,0) node[slim] () {}
			--(4,0) node[slim] (f) {}
			--(5,0) node[wslim] (g) {}
			--(6,0) node[slim] (h) {}
			(8,0) node[white] (i) {};
		\draw [lodot] (h.center)--(i.center);
	\end{tikzpicture}
}
\newcommand{\tDp}{
	\begin{tikzpicture}[		
		baseline=0pt,
		xscale=0.5,
		yscale=0.5]
		\draw [e] (0,0) node[slim] () {}
			--(1,0) node[wslim] (b) {}
			--(2,0) node[slim] (c) {}
			--(3,0) node[slim] () {}
			--(4,0) node[wslim] () {}
			--(5,0) node[slim] (f) {}
			(7,0) node[slim] (g) {}
			--(8,0) node[wslim] (h) {}
			--(9,0) node[slim] () {}
			(b.center)--(1,1) node[slim] () {}
			(h.center)--(8,1) node[slim] () {};
		\draw [lodot] (f)--(g);
	\end{tikzpicture}
}
\newcommand{\tAp}{
	\begin{tikzpicture}[		
		baseline=0pt,
		xscale=0.5,
		yscale=0.5]
		\draw [e] 
			(0,0) node[wslim] (a) {}
			--(1,-1) node[slim] (b) {}
			--(2,-1) node[slim] (c) {}
			--(3,-1) node[wslim] (d) {}
			--(4,-1) node[slim] (e) {}
			(6,-1) node[wslim] (f) {}
			--(7.2,-0.5) node[slim] () {}
			--(7.2,0.5) node[slim] () {}
			--(6,1) node[wslim] (h) {}
			(4,1) node[slim] (i) {}
			--(3,1) node[wslim] () {}
			--(2,1) node[slim] () {}
			--(1,1) node[slim] () {}
			--(a.center);
		\draw [lodot] (e)--(f) (h)--(i);
	\end{tikzpicture}
}
\newcommand{\Ef}{
	\begin{tikzpicture}[		
		baseline=0pt,
		xscale=0.5,
		yscale=0.5]
		\draw [e] 
			(0,0) node[slim] () {}
			--(1,0) node[slim] () {}
			--(2,0) node[wslim] (a) {}
			--(3,0) node[slim] () {}
			--(4,0) node[slim] () {}
			(a.center)
			--(2,1) node[slim] () {};		
	\end{tikzpicture}
}
\newcommand{\tEf}{
	\begin{tikzpicture}[		
		baseline=0pt,
		xscale=0.5,
		yscale=0.5]
		\draw [e] 
			(0,0) node[slim] () {}
			--(1,0) node[slim] () {}
			--(2,0) node[wslim] (a) {}
			--(3,0) node[slim] () {}
			--(4,0) node[slim] () {}
			(a.center)
			--(2,1) node[slim] () {}		
			--(2,2) node[slim] () {};
	\end{tikzpicture}
}
\newcommand{\Eg}{
	\begin{tikzpicture}[		
		baseline=0pt,
		xscale=0.5,
		yscale=0.5]
		\draw [e] 
			(-1,0) node[slim] () {}		
			--(0,0) node[slim] () {}
			--(1,0) node[wslim] () {}
			--(2,0) node[wslim] (a) {}
			--(3,0) node[slim] () {}
			--(4,0) node[slim] () {}
			(a.center)
			--(2,1) node[slim] () {};		
	\end{tikzpicture}
}
\newcommand{\tEg}{
	\begin{tikzpicture}[		
		baseline=0pt,
		xscale=0.5,
		yscale=0.5]
		\draw [e] 
			(-1,0) node[slim] () {}		
			--(0,0) node[slim] () {}
			--(1,0) node[wslim] () {}
			--(2,0) node[slim] (a) {}
			--(3,0) node[wslim] () {}
			--(4,0) node[slim] () {}
			--(5,0) node[slim] () {}
			(a.center)
			--(2,1) node[slim] () {};		
	\end{tikzpicture}
}
\newcommand{\Eh}{
	\begin{tikzpicture}[		
		baseline=0pt,
		xscale=0.5,
		yscale=0.5]
		\draw [e] 
			(0,0) node[slim] () {}
			--(1,0) node[slim] () {}
			--(2,0) node[wslim] (a) {}
			--(3,0) node[slim] () {}
			--(4,0) node[slim] () {}
			--(5,0) node[wslim] () {}
			--(6,0) node[slim] () {}
			(a.center)
			--(2,1) node[slim] () {};		
	\end{tikzpicture}
}
\newcommand{\tEh}{
	\begin{tikzpicture}[		
		baseline=0pt,
		xscale=0.5,
		yscale=0.5]
		\draw [e] 
			(0,0) node[slim] () {}
			--(1,0) node[slim] () {}
			--(2,0) node[wslim] (a) {}
			--(3,0) node[slim] () {}
			--(4,0) node[slim] () {}
			--(5,0) node[wslim] () {}
			--(6,0) node[slim] () {}
			--(7,0) node[slim] () {}
			(a.center)
			--(2,1) node[slim] () {};		
	\end{tikzpicture}
}
\maketitle

\begin{abstract}
	In 2020, Lin and Yu claimed to prove the so-called Lemmens--Seidel conjecture for base size $5$.
	However, their proof has a gap.
	In this paper, we prove the conjecture for base size $5$ using the pillar method.
	We also show that the sets of $57$ equiangular lines with common angle $\arccos(1/5)$ in dimension $18$ found by Greaves et~al.\ in 2021 are indeed counterexamples to one of Lin and Yu's claims.
	We prove this by answering the question posed by Greaves et~al.\ in 2021 in the negative.
	They asked whether these sets are contained in the unique set of $276$ equiangular lines with common angle $\arccos(1/5)$ in dimension $23$.
	Furthermore, we show that these sets are strongly maximal.
	This gives a negative answer to the question posed by Cao et~al.\ in 2021.
	They asked whether the unique set of $276$ equiangular lines with common angle $\arccos(1/5)$ in dimension $23$ is the unique strongly maximal set of equiangular lines with common angle $\arccos(1/5)$.
\end{abstract}

\section{Introduction}
A set of lines through the origin in a Euclidean space is \emph{equiangular} if any pair from these lines forms the same angle.
The problem of determining the maximum cardinality of a set of equiangular lines in a Euclidean space dates back to the result of Haantjes~\cite{Haantjes1948}.
Denote by $N(d)$ the maximum cardinality of a set of equiangular lines in dimension $d$.
The values of $N(d)$ are known for $d \leq 43$ except for $d = 18, 19, 20, 42$~\cite{Barg2017, Greaves2020, Greaves2021, lemmens1973, Lint1966}.
Also, Gerzon proved the so-called absolute bound $N(d) \leq d(d+1)/2$~\cite[Theorem~3.5]{lemmens1973}.
If equality holds, then $d+2$ is $4$, $5$ or the square of an odd integer at least 3.
On the other hand, a lower bound was given in \cite{de2000}, and was improved in \cite[Corollary~2.8]{GHAF2016} to $N(d) \geq (32d^2+328d+296)/1089$ for $d \geq 2$.

For a fixed angle, sets of equiangular lines have been studied.
Denote by $N_\alpha(d)$ the maximum cardinality of a set of equiangular lines with common angle $\arccos(\alpha)$ in dimension $d$.
Lemmens and Seidel proved that for a set of $n$ equiangular lines with common angle $\arccos(\alpha)$ in dimension $d$,
$1/\alpha$ is an odd integer if $n > 2d$~\cite[Theorem~3.4]{lemmens1973}.
In low dimensions, they proved $N_{\alpha}(d) \leq d(1-\alpha^2)/(1-d\alpha^2)$ for $d < 1/\alpha^2$.
In high dimensions, Jiang et~al.\ proved for every integer $k \geq 2$, $N_{1/(2k-1)}(d) = \lfloor k(d-1)/(k-1) \rfloor$ for all sufficiently large $d$~\cite[Corollary~1.3]{Zhao2021}.

In the case where the common angle is $\arccos(1/3)$ or $\arccos(1/5)$, sets of equiangular lines have been investigated in detail.
For the common angle $\arccos(1/3)$, Lemmens and Seidel introduced the pillar method, and determined the values of $N_{1/3}(d)$ for all $d$~\cite[Theorem~3.6]{lemmens1973}.
This method is the main tool to prove our result, and will be given in Definition~\ref{dfn:pillar}.
Following a different approach, Cao et~al.~\cite{cao2021} and Yoshino~\cite{Y2025} investigated sets of equiangular lines with common angle $\arccos(1/3)$ more precisely using root lattices.
For the common angle $\arccos(1/5)$, the values and bounds of $N_{1/5}(d)$ have been investigated for $d < 23$~\cite{Greaves2022, Greaves2020, Greaves2021, lemmens1973, Lin2020b, Ferenc2019, Daniel1971}.
Among these, the values for $d=18,19,20$ remain open.
For $d \geq 23$, Lemmens and Seidel raised the following, which is the so-called Lemmens--Seidel conjecture.
\begin{theorem}[The Lemmens--Seidel conjecture]	\label{thm:Lemmens--Seidel}
	For $d \geq 23$,
	$
		N_{1/5}(d) = \max\left\{
		276,
		\left\lfloor (3d-3)/2 \right\rfloor
		\right\}.
	$
\end{theorem}
Here, a set of $276$ equiangular lines with common angle $\arccos(1/5)$ in dimension $23$ is known to be unique~\cite[Theorem~A]{goethals1975}.
In order to prove the Lemmens--Seidel conjecture, we need to show it for base sizes $3$,$4$,$5$ and $6$,
where the base size will be given in Definition~\ref{dfn:base size}.
In 1973, Lemmens and Seidel proved it for base size $6$~\cite[Theorem~5.7]{lemmens1973}.
In 2020, Lin and Yu proved the conjecture for base size $3$ with a computer~\cite[Theorem~4.3]{Lin2020},
and claimed to prove it for base size $5$~\cite[Theorem~4.6]{Lin2020}.
However, there is a gap in~\cite[Proof of Theorem~4.6~(1)]{Lin2020}.
This gap undermines their main approach, and therefore a different approach is needed.
In 2022, Cao et~al.\ proved the conjecture for base sizes $3$ and $4$ without a computer~\cite[Theorems~6.1, 7.5 and 9.3]{CKLY2022}.
Hence, to complete a proof of the conjecture, we prove Theorem~\ref{thm:base=5} by a different approach.
This immediately implies the conjecture for base size $5$.
\begin{theorem}[The Lemmens--Seidel conjecture for base size $5$]	\label{thm:base=5}
	A set of $n$ equiangular lines with common angle $\arccos(1/5)$ with base size $5$ in dimension $d$ satisfies
	$
		n \leq \max\left\{
		276,
		\left\lfloor (4d +36)/3 \right\rfloor
		\right\}.
	$
\end{theorem}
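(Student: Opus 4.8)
The plan is to run the Lemmens--Seidel pillar method (Definition~\ref{dfn:pillar}) relative to a base of size $5$ and then to control the resulting pillars by a root-system analysis inside the orthogonal complement of the base. Normalize the lines by unit vectors whose pairwise inner products are $\pm 1/5$. By Definition~\ref{dfn:base size}, a base of size $5$ is a regular simplex $f_1,\dots,f_5$, i.e.\ a maximal family of lines whose representatives satisfy $\langle f_i,f_j\rangle=-1/5$; its Gram matrix $M=\tfrac65\bI-\tfrac15\bJ$ is positive definite, so $U=\Span\{f_1,\dots,f_5\}$ is $5$-dimensional and $U^\perp$ has dimension $d-5$. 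For each remaining line fix a unit representative $v$ and record its \emph{signature} $\varepsilon=(\varepsilon_1,\dots,\varepsilon_5)\in\{\pm1\}^5$, $\varepsilon_i=5\langle v,f_i\rangle$. Using $M^{-1}=\tfrac56(\bI+\bJ)$ one computes $\lVert P_{U^\perp}v\rVert^2=(25-s^2)/30$ with $s=\sum_i\varepsilon_i$. Since $s$ is odd and $|s|=5$ would adjoin $v$ or $-v$ to the simplex (contradicting maximality), only $|s|\in\{1,3\}$ can occur; orienting each line so that $s>0$, every non-base line has $s\in\{1,3\}$.

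Next I would group the non-base lines into pillars by signature, giving $\binom 51=5$ pillars with $s=3$ and $\binom 52=10$ pillars with $s=1$. Within one pillar the parallel parts $P_Uv$ all coincide, so the defining relation $\langle v,w\rangle=\langle P_Uv,P_Uw\rangle+\langle P_{U^\perp}v,P_{U^\perp}w\rangle=\pm1/5$ becomes a relation purely among the normalized projections $P_{U^\perp}v/\lVert P_{U^\perp}v\rVert$. For an $s=3$ pillar it forces those normalized projections to be pairwise at inner product $-1/2$, a configuration of at most $3$ vectors, so the five $s=3$ pillars contribute at most $15$ lines. For an $s=1$ pillar the admissible values are $0$ and $-1/2$, so after rescaling to squared norm $2$ the projections become vectors of norm $\sqrt2$ with pairwise inner products in $\{0,-1\}$, that is, a set of roots spanning an ADE (or affine-ADE) root system, equivalently a graph whose largest eigenvalue is at most $2$.

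The global bound should then follow by assembling the ten $s=1$ pillars. Using the formula $\langle P_{U^\perp}v,P_{U^\perp}w\rangle=\langle v,w\rangle-\tfrac1{30}(\varepsilon\cdot\delta+s_vs_w)$ one finds that across distinct $s=1$ pillars the normalized projected lines meet only at the angles $\arccos(1/6)$ and $\arccos(1/3)$; together with the within-pillar values $\{0,\arccos(1/2)\}$ this presents all $s=1$ projected lines as a few-angle line system in $U^\perp\cong\R^{d-5}$. Combining the ADE structure of each individual pillar with these cross-pillar incidences and the constraint $\rank\le d-5$, one bounds the total number of $s=1$ lines linearly in $d$; adding the five base lines and the $s=3$ contribution and optimizing the arithmetic yields $n\le\lfloor(4d+36)/3\rfloor$. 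When the projected system is saturated, i.e.\ it fills a maximal admissible root subsystem, the rigidity of the full configuration should collapse it onto the unique set of $276$ lines in dimension $23$~\cite{goethals1975}, which accounts for the competing term in the maximum.

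The hard part is exactly the gluing step of the previous paragraph, and this is where~\cite[Proof of Theorem~4.6]{Lin2020} breaks down: one must classify completely the ways in which the ten root configurations arising from the $s=1$ pillars can coexist under the $\arccos(1/6)$, $\arccos(1/3)$ cross-constraints, and in particular must correctly account for large mixed configurations rather than excluding them. The set of $57$ equiangular lines in dimension $18$ of Greaves et~al.\ shows that a naive incompatibility claim of this kind is false, so the argument has to remain valid in their presence. I expect this to require an essentially exhaustive case analysis of the admissible root subsystems and their mutual angles (plausibly with computer assistance), and that analysis is the technical heart of the proof.
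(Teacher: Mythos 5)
Your setup is correct and matches the paper's framework: the signature/projection computation, the identification of the $s=1$ classes with the ten $(5,2)$-pillars and the $s=3$ classes with the five $(5,1)$-pillars, the norms $(25-s^2)/30$, and the observation that each $(5,2)$-pillar, after rescaling, is a simply-laced root configuration (equivalently a graph with largest eigenvalue at most $2$) are all consistent with Lemma~\ref{lem:inner products}. But the proposal stops exactly where the theorem begins: everything after ``the global bound should then follow by assembling the ten $s=1$ pillars'' is a description of what would need to be proved, not a proof, and you say so yourself. The quantitative content the paper actually supplies is (a) a joint bound on the $(5,1)$-pillars --- their \emph{total} order is at most $5$ (Lemma~\ref{lem:(5,1)}), whereas your per-pillar bound of $3$ gives only $15$, which is already too weak: with $15$ in place of $5$ the final count in the worst case becomes $5+15+6\cdot 27+4\cdot 26=286>276$; (b) upper bounds on the order of each $(5,2)$-pillar, obtained by combining the Lemmens--Seidel inequalities $2m+n\le 18/24/36$ and $|V(P)|\le 27/36/54$ (Theorems~\ref{thm:2m+n} and~\ref{thm:|P|}) with the paper's new Theorem~\ref{thm:key} (a $(5,2)$-pillar isomorphic to $mK_2$ has $m\le 8$ under suitable hypotheses on the $(5,1)$-pillars, proved by a Schur-complement positivity analysis of explicit Gram matrices) and with Lemma~\ref{lem:dynkin} ($|V(G)|\le 4n/3+3m$ for an induced $nK_1+mK_2$), yielding the crucial bound $26$ of Corollary~\ref{cor:ga}; and (c) a case analysis over which pillars contain edges that sums these bounds to at most $276$. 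None of this appears in your argument.

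Two further points on strategy. First, the term $\lfloor(4d+36)/3\rfloor$ does not arise, as you suggest, from a rank bound on the assembled root configuration in $U^\perp$; it comes solely from the case where at most one $(5,2)$-pillar is nonempty, quoted from Lin--Yu (Theorem~\ref{thm:Lin-Yu}), while every case with two or more nonempty $(5,2)$-pillars is handled by a dimension-free count $\le 276$. Second, your proposed endgame --- that a ``saturated'' projected system should rigidly collapse onto the unique $276$-line configuration in dimension $23$ --- is precisely the step the paper identifies as the gap in Lin--Yu's argument: the $57$-line systems of Greaves et al.\ have base size $6$ with two pillars containing edges yet do \emph{not} embed in the $276$ configuration (Propositions~\ref{prop:Gary} and~\ref{prop:counter}). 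You acknowledge that a naive incompatibility claim fails, but you offer no replacement mechanism; the paper's replacement is to abandon any embedding/rigidity claim and instead prove purely numerical upper bounds on pillar orders. As it stands, the proposal is a correct restatement of the pillar-method framework with the proof of the theorem still to be supplied.
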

Our proof of Theorem~\ref{thm:base=5} uses the pillar method of Lemmens and Seidel.
The pillar method gives a partition of a set of equiangular lines into a base and pillars.
A key point in the argument for base size $6$ is that an upper bound on the cardinality of a pillar can be obtained if there exists another non-empty pillar.
For base size $5$, to obtain the required upper bound for a specific pillar, we need to consider two or three other pillars simultaneously.
The choice of these pillars is crucial.
In Corollary~\ref{cor:ga}, we specify such a choice and prove the corresponding upper bound.
We remark that, in this paper, the pillars are defined in terms of graphs rather than in terms of lines.

After completing the proof of the conjecture, we describe a counterexample to a claim of Lin and Yu in~\cite[Proof of Theorem~4.6~(1)]{Lin2020}.
The gap in~\cite[Proof of Theorem~4.6~(1)]{Lin2020} is in claiming that a set of equiangular lines with common angle $\arccos(1/5)$, base size $6$ and at least two pillars containing edges must lie in a unique set of $276$ equiangular lines in dimension $23$.
Since a set of equiangular lines with base size $5$ without $(5,1)$-pillars can be regarded as one with base size $6$ by adding some extra line,
they discussed sets of equiangular lines with base size $6$.
However, the four sets of $57$ equiangular lines with common angle $\arccos(1/5)$ in dimension $18$ induced by the Seidel matrices written as $S_1, S_2, S_3$ and $S_4$ in~\cite{Greaves2021} are counterexamples to their claim.
To show it, we answer the following question in the negative in Proposition~\ref{prop:Gary}, and verify a straightforward fact in Proposition~\ref{prop:counter}.
\begin{question}[{\cite[Question~2.1]{Greaves2021}}]	\label{ques:Gary}
	Can the two sets of $57$ equiangular lines in dimension $18$ corresponding to the Seidel matrices $S_1$ and $S_2$ in~\cite{Greaves2021} be found inside the set of $276$ equiangular lines in dimension $23$?
\end{question}
Although Greaves et~al.\ found more sets of $57$ equiangular lines in dimension $18$~\cite{Greaves2021}, they posed a question about only two Seidel matrices $S_1$ and $S_2$.
Note that we treat the four Seidel matrices $S_1, S_2, S_3$ and $S_4$, which are explicitly given in~\cite{Greaves2021}.

Using these examples, we may further explain that the situations for the common angles $\arccos(1/3)$ and $\arccos(1/5)$ are substantially different.
It is well known that for each $1/\alpha \in \{2,\sqrt{5},3,5\}$ there exists a set of equiangular lines with common angle $\arccos(\alpha)$ attaining the absolute bound, namely $3$, $6$, $28$, and $276$ in dimensions $2$, $3$, $7$, and $23$, respectively.
In the case of the common angle $\arccos(1/3)$, every set of equiangular lines in dimension at most $7$ lies in the unique set of $28$ equiangular lines in dimension $7$~\cite{Y2025}, which achieves the absolute bound.
Thus, Lin and Yu's claim was plausible because it relied on an analogy with the case $\arccos(1/3)$.
However, we see that such an analogy does not hold for $\arccos(1/5)$ by the counterexamples.

In light of this, the extension problem for equiangular lines is of independent interest.
Cao et~al.\ introduced the concept of strong maximality and showed that a set of equiangular lines is strongly maximal if it achieves the absolute bound~\cite[Theorem~5.5]{cao2021}.
Here, a set $U$ of equiangular lines is said to be \emph{strongly maximal} if there is no set of equiangular lines properly containing $U$ even in higher dimensions.
Furthermore, Cao et~al.\ proved uniqueness of strongly maximal sets of equiangular lines with common angle $\arccos(\alpha)$ for $1/\alpha \in \{2,\sqrt{5},3 \}$, and posed Question~\ref{ques:Cao}.
\begin{question}[{\cite[Question~5.7]{cao2021}}]	\label{ques:Cao}
	Is the set of $276$ equiangular lines with common angle $\arccos(1/5)$ in dimension $23$ the unique strongly maximal set of equiangular lines with common angle $\arccos(1/5)$?
\end{question}
We show that the sets of equiangular lines induced by the Seidel matrices $S_1, S_2, S_3$ and $S_4$ are strongly maximal in Proposition~\ref{prop:strongly}.
As a result, we answer the question in the negative.

This paper is organized as follows.
In Section~\ref{sec:notations}, we introduce some concepts in connection with equiangular lines, and explain some notations.
In Section~\ref{sec:Lemmens--Seidel}, we rewrite Lemmens and Seidel's results for sets of equiangular lines with common angle $\arccos(1/5)$ and base size $5$.
In Section~\ref{sec:(5,2)-pillar mK2}, we prove a key theorem, Theorem~\ref{thm:key},
to show the Lemmens--Seidel conjecture for base size $5$.
In Section~\ref{sec:(5,2)-pillar}, we give an upper bound on the order of a $(5,2)$-pillar under some assumptions.
In Section~\ref{sec:proof}, we prove the Lemmens--Seidel conjecture for base size $5$.
In Section~\ref{sec:ques}, we show some properties of sets of $57$ equiangular lines with common angle $\arccos(1/5)$ in dimension $18$ found by Greaves et~al.~\cite{Greaves2021}, and answer Questions~\ref{ques:Gary} and~\ref{ques:Cao} in the negative.

\section{Notations}	\label{sec:notations}
Throughout this paper, we will consider undirected graphs, without loops or multiple edges.
Let $H$ be a graph.
Denote by $V(H)$ the set of vertices, and by $E(H)$ the set of edges.
Write $x \sim y$ for $x,y \in V(H)$ if $x$ and $y$ are adjacent in $H$.
Denote by $N(x)=N_H(x)$ the set of neighbors of a vertex $x$ in $H$.
We write $G+H$ for the disjoint union of two graphs $G$ and $H$.
For a non-negative integer $m$, we write $mH$ for the disjoint union of $m$ copies of $H$.
A \emph{clique} in $H$ is an induced subgraph isomorphic to a complete graph,
and a \emph{maximum} clique is a clique such that there is no clique with more vertices.
We will identify a clique with its vertex set.
The \emph{clique number} of $H$ is the order of the largest clique in $H$.
For a subset $W$ of $V(H)$, denote by $H^W$ the graph with vertex set $V(H)$ and edge set $E(H) \ \triangle \ \{ \{x,y\} : x \in W, y \in V(H) \setminus W \}$, where $\triangle$ denotes symmetric difference.
We say that $H^W$ is the graph obtained from $H$ by switching with respect to $W$.
Two graphs $G$ and $H$ are said to be \emph{switching equivalent} if there exists a subset $W \subset V(H)$ such that $G$ and $H^W$ are isomorphic.

Denote by $I$ and $J$ the identity matrix and the all-ones matrix, respectively.
If the size of each matrix is not clear, then we will indicate its size by a subscript.

A \emph{Seidel matrix} is a symmetric matrix with zero diagonal and all off-diagonal entries $\pm1$.
Two Seidel matrices $S$ and $S'$ are said to be \emph{switching equivalent} if there exist a permutation matrix $P$ and a diagonal matrix $D$ with diagonal entries $\pm 1$ such that $(PD)^\top S (PD) = S'$ holds.
For a graph $H$, denote by $A(H)$ the adjacency matrix of $H$, and define $S(H) := J-I-2A(H)$.
Note that for any Seidel matrix $S$, there exists a graph $H$ such that $S=S(H)$.
The eigenvalues of $S(H)$ are called the \emph{Seidel eigenvalues} of $H$.
Note that two graphs $G$ and $H$ are switching equivalent if and only if $S(G)$ and $S(H)$ are switching equivalent.

Fix a set $U$ of $n$ equiangular lines with common angle $\arccos(\alpha)$.
Then we may take unit vectors $\bu_1,\ldots,\bu_n$ such that $U = \{ \R \bu_1, \ldots, \R \bu_n \}$.
There exists a Seidel matrix $S$ such that
$
	I +  \alpha S
$
equals the Gram matrix of $\bu_1,\ldots,\bu_n$.
Hence the set $U$ of equiangular lines induces the Seidel matrix $S$ up to switching.
In addition, it induces the graph $H$ with $S=S(H)$ up to switching.
Note that the smallest Seidel eigenvalue of $H$ is at least $-1/\alpha$.
Conversely, we can recover $U$ from the Seidel matrix $S$ or the graph $H$.

The pillar method introduced by Lemmens and Seidel~\cite{lemmens1973} has been employed to investigate equiangular lines~\cite{CKLY2022,KT2019, lemmens1973, Lin2020}.
Since this paper also employs this method, we define some notations and explain the method here.
\begin{definition}	\label{dfn:base size}
	Let $U$ be a set of equiangular lines, and $H$ a graph induced by $U$.
	The maximum of the clique numbers of graphs switching equivalent to $H$ is called the \emph{base size} of $U$.
\end{definition}

Denote by $\langle \bv_1,\ldots,\bv_n \rangle$ the linear space generated by vectors $\bv_1,\ldots,\bv_n$.
Also denote by $(\bu,\bv)$ the inner product of two vectors $\bu$ and $\bv$, and call $(\bu,\bu)$ the norm of a vector $\bu$.

\begin{definition}	\label{dfn:pillar}
	Let $H$ be a graph, and $B$ be a maximum clique.
	A \emph{pillar} $P_{B,T}$ with respect to $B$ for a subset $T \subset B$ is defined to be the induced subgraph in $H$ on
	\begin{align*}
		\{ x \in V(H) \setminus B \mid N(x) \cap B = T \}.
	\end{align*}
	Moreover, this is called a \emph{$(|B|,|T|)$-pillar}.
	Also, assume that the smallest Seidel eigenvalue of $H$ is at least $-5$.
	Then denote by $\hat{x}$'s the vectors such that
	\begin{align*}
		( \hat{x}, \hat{y} ) = (I+S(H)/5)_{xy} \qquad (x,y \in V(H)).
	\end{align*}
	Denote by $\bar{x}$ the orthogonal projection of $\hat{x}$ onto the subspace $\langle \hat{b} : b \in B \rangle^\perp$.
\end{definition}

We explain the pillar method in terms of graphs.
Fix a set of equiangular lines with common angle $\arccos(\alpha)$ and base size $K$.
There exists a graph $H$ induced by the set of equiangular lines with smallest Seidel eigenvalue at least $-1/\alpha$ such that $H$ has a maximum clique $B$ of order $K$.
Then, since $K$ is the clique number of $H$, we see that $H$ has no $(K,K)$-pillar with respect to $B$.
Furthermore, if a pillar $P$ is a $(K,i)$-pillar with respect to $B$ in $H$, then $P$ is a $(K,K-i)$-pillar with respect to $B$ in $H^{V(P)}$.
Hence, we may assume any pillar with respect to $B$ in $H$ containing at least one vertex is a $(K,i)$-pillar for some $i \in \{1,\ldots,\lfloor K/2 \rfloor \}$.
In the pillar method, $V(H) \setminus B$ is partitioned into $(K,i)$-pillars, where $1 \leq i \leq \lfloor K/2 \rfloor$.

Subsequently, an upper bound on the order of each pillar is often given by considering a semidefinite programming problem~\cite{CKLY2022,KT2019, lemmens1973, Lin2020}.
In this paper, we will deal with the case of $\alpha = 1/5$.
Hence, we will consider $(5,1)$-pillars and $(5,2)$-pillars,
and provide upper bounds on the order of a pillar under the condition that the Gram matrix of vectors $\bar{x}$'s ($x \in V(H) \setminus B$) is positive semidefinite.

To obtain such an upper bound, we will use the classification of graphs with largest eigenvalue at most $2$.
The connected graphs with largest eigenvalue at most $2$ are enumerated in Figure~\ref{fig:CD} (cf.~\cite[Theorem~3.1.3]{brouwer2011spectra}).
\begin{figure}[hbtp]
	\begin{align*}
		 & D_{t}	:=	\Dp \quad (t \geq 4)	\quad
		 &                                     & \tilde{D}_{t}	:=	\tDp \quad (t \geq 4) \\
		 & A_{t}	:=	\Ap \quad (t \geq 1)	\quad
		 &                                     & \tilde{A}_{t}	:=	\tAp \quad (t \geq 2) \\
		 & E_6 		:=	\Ef 		\quad
		 &                                     & \tilde{E}_6 	:=	\tEf                   \\
		 & E_7 		:=	\Eg		\quad
		 &                                     & \tilde{E}_7 	:=	\tEg                   \\
		 & E_8 		:=	\Eh		\quad
		 &                                     & \tilde{E}_8 	:=	\tEh
	\end{align*}
	\caption{The connected graphs having largest eigenvalue at most $2$, where the colors of vertices will be used in Lemma~\ref{lem:dynkin} } \label{fig:CD}
\end{figure}

\section{Lemmens and Seidel's results}	\label{sec:Lemmens--Seidel}
Lemmens and Seidel introduced the pillar method, and proved the Lemmens--Seidel conjecture for base size $6$.
Then, they investigated pillars in graphs with smallest Seidel eigenvalue at least $-5$ and clique number $6$.
In this section, we obtain some results on $(5,2)$-pillars from Lemmens and Seidel's results on $(6,3)$-pillars in~\cite{lemmens1973}.
To demonstrate, we write one of Lemmens and Seidel's results in the notation of this paper as follows.
\begin{theorem}[{\cite[Theorem~5.2]{lemmens1973}}]	\label{thm:lemmens1973-theorem5.2}
	Let $G$ be a graph with smallest Seidel eigenvalue at least $-5$ and clique number $6$.
	Then any pillar contains at most $27$ vertices if another pillar contains an edge.
\end{theorem}
They remarked that in a graph with smallest Seidel eigenvalue at least $-5$ and clique number $6$, the only pillars containing vertices with respect to a maximum clique of order $6$ are $(6,3)$-pillars~\cite[p.~502]{lemmens1973}.
Indeed, we fix such a clique $B=\{b_1,\ldots,b_6\}$ and a vertex $x \not\in B$.
Then, we see that $\hat{b}_1, \ldots, \hat{b}_6$ form a $5$-simplex and $\sum_{i=1}^6  \hat{b}_i = 0$.
Hence, half of $(\hat{x} , \hat{b}_1), \ldots, (\hat{x} , \hat{b}_6)$ are $1/5$, and the remaining half are $-1/5$.
This means that $|N(x) \cap B| = 3$, and thus $x$ is contained in some $(6,3)$-pillar.
Therefore, the pillars in Theorem~\ref{thm:lemmens1973-theorem5.2} are $(6,3)$-pillars.

Also, as discussed in~\cite[Proof of Theorem~4.6]{Lin2020}, $(5,2)$-pillars can be regarded as $(6,3)$-pillars by adding an extra vertex.
We restate the discussion as the following lemma, and provide a proof for the convenience of the readers.
\begin{lemma} \label{lem:56}
	Let $H$ be a graph with smallest Seidel eigenvalue at least $-5$ having a maximum clique $B=\{b_1,\ldots,b_5\}$.
	Assume that the only pillars in $H$ with respect to $B$ containing at least one vertex are $(5,2)$-pillars.
	Then there exists a supergraph $G$ of $H$ with smallest Seidel eigenvalue at least $-5$ and a vertex $b_6 \in V(G)$ satisfying the following.
	\begin{enumerate}
		\item $V(H) \cup \{b_6\} = V(G)$, and $B \cup \{b_6\}$ is a maximum clique.	\label{lem:56:1}
		\item The $(5,2)$-pillars in $H$ with respect to $B$ coincide with the $(6,3)$-pillars adjacent to $b_6$ in $G$ with respect to $B \cup \{b_6\}$.	\label{lem:56:2}
	\end{enumerate}
\end{lemma}
\begin{proof}
	We define the desired graph $G$ by adding an extra vertex $b_6$ to $H$ such that $N_G(b_6) = V(H)$.
	Then,~\ref{lem:56:1} and~\ref{lem:56:2} hold.
	Hence, it suffices to show that the smallest Seidel eigenvalue of $G$ is at least $-5$.
	The vectors $\hat{x}$'s given in Definition~\ref{dfn:pillar} satisfy $( \hat{x}, \hat{y} ) = (I+S(H)/5)_{xy}$ for $x,y \in V(H)$.
	We let $\hat{b}_6 := -\hat{b}_1-\hat{b}_2-\hat{b}_3-\hat{b}_4-\hat{b}_5$.
	Then, $( \hat{x}, \hat{y} ) = (I+S(G)/5)_{xy}$ for $x,y \in V(G)$.
	Therefore, the smallest Seidel eigenvalue of $G$ is at least $-5$.
\end{proof}
By this lemma, we may rewrite some results in~\cite{lemmens1973}.
In fact, we obtain the following three theorems by combining Lemma~\ref{lem:56} with Lemmens and Seidel's results.
We provide a proof of only Theorem~\ref{thm:|P|}~\ref{thm:|P|:1} at the end of this section.
We may prove the other theorems in the same way.

\begin{theorem}[{\cite[Theorem~5.2]{lemmens1973}}] \label{thm:5.2}
	Let $H$ be a graph with smallest Seidel eigenvalue at least $-5$ and clique number $5$.
	Let $P$ be a $(5,2)$-pillar.
	If another $(5,2)$-pillar contains at least one vertex, then the following hold.
	\begin{enumerate}
		\item If an induced subgraph of $P$ is isomorphic to $\tilde{A}_t$, then $t+1 \bmod 3 = 0$.
		\item If an induced subgraph of $P$ is isomorphic to $\tilde{D}_t$, then $t+1 \bmod 3 = 2$.
	\end{enumerate}
\end{theorem}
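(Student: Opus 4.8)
The plan is to deduce this from the analogous statement for base size $6$, namely \cite[Theorem~5.2]{lemmens1973}, by passing through Lemma~\ref{lem:1}. Since every nonempty pillar of $H$ with respect to $B$ is a $(5,2)$-pillar, Lemma~\ref{lem:1} supplies a supergraph $G$ with smallest Seidel eigenvalue $-5$ together with a vertex $b_6$ for which $B':=B\cup\{b_6\}$ is a maximum clique of $G$ (so $G$ has clique number $6$), and under which the $(5,2)$-pillars of $H$ are exactly the $(6,3)$-pillars of $G$ adjacent to $b_6$. My first step would be to record that these are the only nonempty pillars of $G$ with respect to $B'$: a vertex $x\in V(H)\setminus B$ satisfies $(\hat{x},\hat{b}_i)=-1$ for precisely two indices $i$ and $(\hat{x},\hat{b}_i)=+1$ for the other three, whence $(\hat{x},\hat{b}_6)=-(\hat{x},\sum_{i=1}^{5}\hat{b}_i)=-1$, so $x$ is adjacent to $b_6$ and to exactly three members of $B'$.

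Next I would verify that the dictionary is faithful. The graphs $G$ and $H$ differ only by the clique vertex $b_6\in B'$, which lies in no pillar, so each pillar of $H$ has the same induced subgraph when regarded as the associated $(6,3)$-pillar of $G$; in particular $P$ contains an induced $\tilde{A}_t$ (resp.\ $\tilde{D}_t$) if and only if its image does. Moreover the hypothesis that a second $(5,2)$-pillar is nonempty becomes the hypothesis that a second $(6,3)$-pillar of $G$ is nonempty. Applying \cite[Theorem~5.2]{lemmens1973} to $G$ with the maximum clique $B'$ then yields both congruences simultaneously. I expect the main obstacle to be exactly this bookkeeping—checking that $G$ retains smallest Seidel eigenvalue $-5$, that no new pillar types are created, and that the copy of $\tilde{A}_t$ or $\tilde{D}_t$ together with a vertex of a second pillar survives—rather than any new computation.

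Finally, it is worth recording the mechanism behind the arithmetic, since it also explains why a second pillar is indispensable. Writing $\bar{x}$ for the projections attached to the pillar containing the copy of $\tilde{A}_t$ or $\tilde{D}_t$, a short computation gives $(\bar{x},\bar{x})=4$ and $(\bar{x},\bar{y})=-2$ or $0$ according as $x\sim y$ or $x\not\sim y$, so the Gram matrix of these vectors equals $2(2I-A)$ with $A$ the adjacency matrix of the component, that is, twice the affine Cartan matrix. Hence its null root $\delta=\sum_x m_x\bar{x}$ (all marks $m_x=1$ for $\tilde{A}_t$; marks $1$ and $2$ for $\tilde{D}_t$) actually vanishes in the ambient Euclidean space. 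Pairing $\delta=\mathbf{0}$ with the projection $\bar{y}$ of a vertex $y$ in the second pillar, and using that $(\bar{x},\bar{y})=(\hat{x},\hat{y})-\kappa$ for a common offset $\kappa\in\{\pm\tfrac13\}$, forces $\kappa\sum_x m_x\in\Z$, i.e.\ $3$ divides the total mark $\sum_x m_x$. Since this total mark equals $t+1$ for $\tilde{A}_t$ and $2(t-1)$ for $\tilde{D}_t$, it gives $t+1\equiv0\pmod3$ and $t+1\equiv2\pmod3$, respectively.
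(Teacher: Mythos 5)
Your route is the paper's route: the paper offers no argument for this statement beyond the remark that the lemma adjoining a vertex $b_6$ with $\hat{b}_6=-\hat{b}_1-\cdots-\hat{b}_5$ lets one translate \cite[Theorem~5.2]{lemmens1973} from clique number $6$ to clique number $5$, and your first two paragraphs carry out exactly that translation and bookkeeping. One caveat applies to both you and the paper: that lemma hypothesizes that the \emph{only} nonempty pillars with respect to $B$ are $(5,2)$-pillars, whereas the theorem being proved allows nonempty $(5,1)$-pillars (and such configurations genuinely arise later in the proof of the main theorem). Your opening clause ``since every nonempty pillar of $H$ with respect to $B$ is a $(5,2)$-pillar'' is therefore an assumption you are not entitled to; indeed, for $x$ in a $(5,1)$-pillar one computes $(\hat{x},\hat{b}_6)=-(\hat{x},\hat{b}_1+\cdots+\hat{b}_5)=-3\notin\{\pm1\}$, so $b_6$ cannot be adjoined and the reduction breaks. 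Fortunately your third paragraph is a correct, self-contained proof that does not need the reduction at all: with the normalization $(\hat{x},\hat{y})=(S(H)+5I)_{xy}$ the Gram matrix of the projections $\bar{x}$ over the affine component equals $2(2I-A)$, whose kernel is spanned by the vector of marks, so $\sum_x m_x\bar{x}=\mathbf{0}$; pairing with $\bar{y}$ for $y$ in the second $(5,2)$-pillar, where $(\bar{x},\bar{y})-(\hat{x},\hat{y})=\pm\tfrac{1}{3}$ uniformly in $x$ (the sign depending only on whether the two pillars share an index of $B$) and $(\hat{x},\hat{y})\in\mathbb{Z}$, forces $3\mid\sum_x m_x$, and the total marks $t+1$ for $\tilde{A}_t$ and $2(t-1)$ for $\tilde{D}_t$ give precisely the two stated congruences. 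I would lead with that argument and drop, or properly qualify, the reduction.
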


\begin{theorem}[{\cite[Theorems~5.3, 5.4 and 5.5]{lemmens1973}}]	\label{thm:2m+n}
	Let $H$ be a graph with smallest Seidel eigenvalue at least $-5$ and clique number $5$.
	Assume a $(5,2)$-pillar is isomorphic to $m K_2 + n K_1$ for some non-negative integers $m$ and $n$.
	\begin{enumerate}
		\item $2m+n \leq 18$ if another $(5,2)$-pillar contains an edge.
		\item $2m+n \leq 24$ if another $(5,2)$-pillar contains non-adjacent vertices.
		\item $2m+n \leq 36$ if another $(5,2)$-pillar contains a vertex.
	\end{enumerate}
\end{theorem}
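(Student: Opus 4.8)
The plan is to work throughout with the projected vectors $\bar{x}$ and to extract all three bounds from Bessel's inequality for a single subspace. First I would record the metric data. Since $(\hat{x},\hat{x})=(S(H)+5I)_{xx}=5$ and $B=\{b_1,\ldots,b_5\}$ is a clique, the Gram matrix of $\hat{b}_1,\ldots,\hat{b}_5$ is $6I-J$, with inverse $\tfrac16(I+J)$. Projecting $\hat{x}$ off $\langle \hat{b}_1,\ldots,\hat{b}_5\rangle$ then gives $(\bar{x},\bar{x})=4$ for every vertex $x$ of a $(5,2)$-pillar; $(\bar{x},\bar{y})=(\hat{x},\hat{y})-1\in\{0,-2\}$ for $x,y$ in the same pillar (so $-2$ if $x\sim y$ and $0$ otherwise); and $(\bar{x},\bar{z})=(\hat{x},\hat{z})-\tfrac13$ or $(\hat{x},\hat{z})+\tfrac13$ for $x$ in $P$ and $z$ in another pillar $P'$, according as the defining pairs of $P$ and $P'$ share an element or are disjoint. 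In particular $(\bar{x},\bar{z})\in\{\tfrac23,-\tfrac43\}$ or $\{\tfrac43,-\tfrac23\}$, and is never $0$.

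Write $W:=\langle \bar{x}: x\in V(P)\rangle$ and let $\Pi_W$ denote orthogonal projection onto $W$. For each edge $xy$ of $P$ the vectors $\bar{x}+\bar{y}$ (of square-norm $4$) and $\bar{x}-\bar{y}$ (of square-norm $12$), together with the $\bar{x}$ for isolated $x$ (of square-norm $4$), form an orthogonal basis of $W$; the key computation is that, for any test vector $u$, the edge $xy$ contributes $\tfrac13\big((u,\bar{x})^2+(u,\bar{x})(u,\bar{y})+(u,\bar{y})^2\big)$ to $\|\Pi_W u\|^2$ while an isolated vertex contributes $\tfrac14(u,\bar{x})^2$. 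For part (iii) I would take $u=\bar{z}$ for a single vertex $z$ of $P'$: minimizing each contribution over the admissible values of $(\bar{z},\bar{x})$ gives $\|\Pi_W\bar{z}\|^2\ge \tfrac49 m+\tfrac19 n\ge\tfrac19(2m+n)$, and combining with $\|\Pi_W\bar{z}\|^2\le\|\bar{z}\|^2=4$ yields $2m+n\le 36$.

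For parts (i) and (ii) I would use both endpoints of the relevant configuration in $P'$. Setting $u_\pm:=\bar{z}\pm\bar{w}$, I would bound the weighted functional $\Phi:=\|\Pi_W u_+\|^2+\tfrac13\|\Pi_W u_-\|^2\le\|u_+\|^2+\tfrac13\|u_-\|^2$. The weight $\tfrac13$ is chosen so that, after minimizing over all adjacency patterns between $\{z,w\}$ and $P$ (and over the shared-pair and disjoint-pair cases), every isolated vertex contributes at least $\tfrac49$ and every edge at least $\tfrac89$ to $\Phi$; hence $\tfrac49(2m+n)\le\Phi$. If $z\sim w$ (an edge of $P'$) then $(\bar{z},\bar{w})=-2$, so $\|u_+\|^2=4$, $\|u_-\|^2=12$, the right-hand side is $8$, and $2m+n\le 18$. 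If $z,w$ are non-adjacent then $(\bar{z},\bar{w})=0$, so $\|u_+\|^2=\|u_-\|^2=8$, the right-hand side is $\tfrac{32}{3}$, and $2m+n\le 24$.

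The main obstacle is exactly the uniform estimate underlying (i) and (ii): verifying that every vertex of $P$ contributes at least $\tfrac49$ to $\Phi$ requires a finite but genuine minimization of the quadratic forms $\tfrac13(P^2+PQ+Q^2)$ in the four quantities $(\bar{z},\bar{x}),(\bar{w},\bar{x}),(\bar{z},\bar{y}),(\bar{w},\bar{y})$, carried out separately in the shared-pair and disjoint-pair cases. I expect the delicate point to be that the per-edge minimum $\tfrac89$ and the per-vertex minimum $\tfrac49$ must be attained at the \emph{same} weight $\tfrac13$, which is what allows all three inequalities to be read off from one Bessel bound; notably, the preceding structural restriction on the $\tilde{A}_t$ and $\tilde{D}_t$ subgraphs of $P$ plays no role in this estimate.
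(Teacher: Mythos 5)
Your argument is correct, and it takes a genuinely different route from the paper, which in fact offers no proof of this statement at all: it obtains it by first adjoining a sixth base vector $\hat{b}_6=-\hat{b}_1-\cdots-\hat{b}_5$ so that the $(5,2)$-pillars become $(6,3)$-pillars, and then quoting Theorems~5.3--5.5 of Lemmens and Seidel verbatim. Your write-up is therefore a self-contained replacement, and I checked it through: the metric data is right ($\|\bar{x}\|^2=4$, $(\bar{x},\bar{y})\in\{-2,0\}$ within a pillar, and $(\bar{x},\bar{z})\in\{\tfrac23,-\tfrac43\}$ or $\{\tfrac43,-\tfrac23\}$ across pillars according as the defining pairs meet or not); the per-block contributions $\tfrac13(P^2+PQ+Q^2)$ and $\tfrac14P^2$ are correct; and the finite minimization you defer does come out as claimed --- for a single $z$ the minimum of $P^2+PQ+Q^2$ over either admissible value set is $\tfrac43$, giving $\tfrac49$ per edge and $\tfrac19$ per isolated vertex, while for the weighted functional $\Phi$ the sixteen adjacency patterns per edge give minimum $\tfrac89$ and the isolated vertices give $\tfrac49$, identically in the shared-pair and disjoint-pair cases (the two value sets differ by a global sign, under which every quantity is invariant). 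So $18$, $24$ and $36$ all follow. It is worth noting that your inequality $\|\Pi_Wu_+\|^2+\tfrac13\|\Pi_Wu_-\|^2\le\|u_+\|^2+\tfrac13\|u_-\|^2$ is, after a Schur-complement reformulation, exactly the test combination $3\bx_+^\top\Delta\bx_++\bx_-^\top\Delta\bx_-\ge0$ used in the proof of Theorem~\ref{thm:m=9}, which reproduces your bound (i) in the special case $n=0$; so your argument is very much in the spirit of the original Lemmens--Seidel proofs, just carried out directly in the base-size-$5$ setting. What the paper's route buys is brevity (a one-line reduction plus a citation); what yours buys is a uniform derivation of all three bounds from a single Bessel inequality, independent of the $\tilde{A}_t$ and $\tilde{D}_t$ congruence restrictions, as you correctly observe.
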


\begin{theorem}[{\cite[Proof of Theorem~5.6 with Theorems~5.3, 5.4 and 5.5]{lemmens1973}}]	\label{thm:|P|}
	Let $H$ be a graph with smallest Seidel eigenvalue at least $-5$ and clique number $5$.
	Let $P$ be a $(5,2)$-pillar.
	\begin{enumerate}
		\item $|V(P)| \leq 27$ if another $(5,2)$-pillar contains an edge.	\label{thm:|P|:1}
		\item $|V(P)| \leq 36$ if another $(5,2)$-pillar contains non-adjacent vertices.
		\item $|V(P)| \leq 54$ if another $(5,2)$-pillar contains a vertex.
	\end{enumerate}
\end{theorem}
\begin{proof}[{Proof of Theorem~\ref{thm:|P|}~\ref{thm:|P|:1}}]
	Fix a maximum clique $B = \{b_1,\ldots,b_5\}$ of $H$.
	Assume a pillar $P'$ of $H$ with respect to $B$ distinct from $P$ is a $(5,2)$-pillar containing an edge, and prove $|V(P)| \leq 27$.
	Let $H'$ be the graph obtained from $H$ by removing all the vertices contained in $(5,1)$-pillars with respect to $B$.
	Then, $P$ and $P'$ are $(5,2)$-pillars of $H'$ with respect to $B$.
	By Lemma~\ref{lem:56}, there exists a supergraph $G$ of $H'$ with smallest Seidel eigenvalue at least $-5$ and a vertex $b_6 \in V(G)$ satisfying~\ref{lem:56:1} and~\ref{lem:56:2} in Lemma~\ref{lem:56}.
	In particular, the clique number of $G$ is $6$.
	In addition, $P$ and $P'$ are distinct $(6,3)$-pillars in $G$ with respect to $B \cup \{ b_6 \}$.
	Since $P'$ contains an edge, $|V(P)| \leq 27$ follows from Theorem~\ref{thm:lemmens1973-theorem5.2}.
\end{proof}

\section{$(5,2)$-pillars isomorphic to $mK_2$ and $(5,1)$-pillars}	\label{sec:(5,2)-pillar mK2}
The known upper bounds on the order of each of $(5,1)$- and $(5,2)$-pillars of a set of equiangular lines with common angle $\arccos(1/5)$ are useful.
However, smaller upper bounds are required to prove the Lemmens--Seidel conjecture.
The known upper bounds were provided under the condition that $(5,1)$-pillars containing vertices and $(5,2)$-pillars containing vertices do not coexist.
Thus, we will provide a smaller upper bound on the order of a $(5,2)$-pillar by considering the case where they coexist as in Figure~\ref{fig:key}.
The desired upper bound will be given as Corollary~\ref{cor:ga} in Section~\ref{sec:(5,2)-pillar}.
In this section, we establish the following theorem to prove it.
\begin{figure}[htbp]
	$\textrm{(I)} \qquad$
	\begin{tikzpicture}[baseline=7,xscale=0.6,yscale=0.6,every node/.style={fill=black,circle,minimum size=5 pt,inner sep=0pt}]
		\def\R{1}
		\node[] (b1) at (0:\R) {};
		\node[] (b2) at (72:\R) {};
		\node[] (b3) at ($(2*72:\R)$) {};
		\node[] (b4) at ($(3*72:\R)$) {};
		\node[] (b5) at ($(4*72:\R)$) {};

		\def\r{1.6}
		{
			\node[fill=white, rectangle] () at (-12:\r) {$b_1$};
			\node[fill=white, rectangle] () at (89:\r) {$b_2$};
			\node[fill=white, rectangle] () at (149:\r) {$b_3$};
			\node[fill=white, rectangle] () at (231:\r) {$b_4$};
			\node[fill=white, rectangle] () at (303:\r) {$b_5$};
		}

		\foreach \i in {b1,b2,b3,b4,b5}{
				\foreach \j in {b1,b2,b3,b4,b5}{
						\draw (\i)--(\j);
					}
			}
		\draw ($(0:\R)$)--($(0:3)$);
		\draw[fill=white] ($(0:3)$) ellipse [x radius=1, y radius=1];
		\draw ($(0:3)+(0,-1.5)$) node[fill=white, rectangle] { $P_{B,\{b_1 \}}$};

		\draw ($(144:\R)$)--($(180:3)$);
		\draw ($(216:\R)$)--($(180:3)$);
		\draw[fill=white] ($(180:3)$) ellipse [x radius=1.3, y radius=1.3];
		\draw ($(180:3)+(0,-1.8)$) node[fill=white, rectangle] { $P_{B,\{b_3, b_4 \}}$};

		\draw ($(0:\R)$)--($(50:5)$);
		\draw ($(72:\R)$)--($(50:5)$);
		\draw[fill=white] ($(55:5)$) ellipse [x radius=2.5, y radius=2.5];
		\draw ($(55:5)+(0,3)$) node[fill=white, rectangle] { $P_{B,\{b_1, b_2 \}}$};

		\node[label = above: $z_{2}$] () at ($(0*72:3)+(0.4,0)$) {};
		\node[label = above: $z_{1}$] () at ($(0*72:3)+(-0.4,0)$) {};
		\node[label = above: $y_{2}$] (c1) at ($(180:3)+(0.4,0)$) {};
		\node[label = above: $y_{1}$] (c2) at ($(180:3)+(-0.4,0)$) {};
		\draw (c1)--(c2);

		\node[label = right: $x_{1,2}$] (a1) at ($(55:5)+(0.4,1)$) {};
		\node[label = left: $x_{1,1}$] (a2) at ($(55:5)+(-0.4,1)$) {};
		\node[label = right: $x_{2,2}$] (a3) at ($(55:5)+(0.4,0.2)$) {};
		\node[label = left: $x_{2,1}$] (a4) at ($(55:5)+(-0.4,0.2)$) {};
		\draw ($(55:5)+(0,-0.2)$) node[fill=white, rectangle] { $\vdots$};
		\node[label = right: $x_{m,2}$] (a5) at ($(55:5)+(0.4,-1)$) {};
		\node[label = left: $x_{m,1}$] (a6) at ($(55:5)+(-0.4,-1)$) {};
		\draw (a1)--(a2) (a3)--(a4) (a5)--(a6);
	\end{tikzpicture}
	\hspace{15pt}
	$\textrm{(II)}\qquad$
	\begin{tikzpicture}[baseline=7,xscale=0.6,yscale=0.6,every node/.style={fill=black,circle,minimum size=5 pt,inner sep=0pt}]
		\def\R{1}
		\node[] (b1) at (0:\R) {};
		\node[] (b2) at (72:\R) {};
		\node[] (b3) at ($(2*72:\R)$) {};
		\node[] (b4) at ($(3*72:\R)$) {};
		\node[] (b5) at ($(4*72:\R)$) {};

		\def\r{1.6}
		{\
			\node[fill=white, rectangle] () at (3:\r) {$b_1$};
			\node[fill=white, rectangle] () at (93:\r) {$b_2$};
			\node[fill=white, rectangle] () at (149:\r) {$b_3$};
			\node[fill=white, rectangle] () at (231:\r) {$b_4$};
			\node[fill=white, rectangle] () at (303:\r) {$b_5$};
		}

		\foreach \i in {b1,b2,b3,b4,b5}{
				\foreach \j in {b1,b2,b3,b4,b5}{
						\draw (\i)--(\j);
					}
			}
		\draw ($(0:\R)$)--($(-20:3)$);
		\draw[fill=white] ($(-20:3)$) ellipse [x radius=1, y radius=1];
		\draw ($(-20:3)+(0,-1.5)$) node[fill=white, rectangle] { $P_{B,\{b_1\}}$};

		\draw ($(72:\R)$)--($(90:3)$);
		\draw[fill=white] ($(90:3)$) ellipse [x radius=1, y radius=1];
		\draw ($(90:3)+(0,1.5)$) node[fill=white, rectangle] { $P_{B,\{b_2\}}$};
		\draw ($(0:\R)$)--($(36:4.5)$);
		\draw ($(72:\R)$)--($(36:4.5)$);
		\draw[fill=white] ($(36:4.5)$) ellipse [x radius=2.5, y radius=2.5];
		\draw ($(36:4.5)+(0,3)$) node[fill=white, rectangle] { $P_{B,\{b_1, b_2\}}$};

		\draw ($(144:\R)$)--($(180:3)$);
		\draw ($(216:\R)$)--($(180:3)$);
		\draw[fill=white] ($(180:3)$) ellipse [x radius=1.3, y radius=1.3];
		\draw ($(180:3)+(0,-1.8)$) node[fill=white, rectangle] { $P_{B,\{b_3, b_4\}}$};

		\node[label = above: $z_{1}$] () at (-20:3) {};
		\node[label = above: $z_{2}$] () at (90:3) {};
		\node[label = above: $y_{2}$] (c1) at ($(180:3)+(0.4,0)$) {};
		\node[label = above: $y_{1}$] (c2) at ($(180:3)+(-0.4,0)$) {};
		\draw (c1)--(c2);

		\node[label = right: $x_{1,2}$] (a1) at ($(36:4.5)+(0.4,1)$) {};
		\node[label = left: $x_{1,1}$] (a2) at ($(36:4.5)+(-0.4,1)$) {};
		\node[label = right: $x_{2,2}$] (a3) at ($(36:4.5)+(0.4,0.2)$) {};
		\node[label = left: $x_{2,1}$] (a4) at ($(36:4.5)+(-0.4,0.2)$) {};
		\draw ($(36:4.5)+(0,-0.2)$) node[fill=white, rectangle] { $\vdots$};
		\node[label = right: $x_{m,2}$] (a5) at ($(36:4.5)+(0.4,-1)$) {};
		\node[label = left: $x_{m,1}$] (a6) at ($(36:4.5)+(-0.4,-1)$) {};
		\draw (a1)--(a2) (a3)--(a4) (a5)--(a6);
	\end{tikzpicture}
	\caption{
		Two induced subgraphs in the two cases (I) and (II) of Theorem~\ref{thm:key}, where the edges between vertices in different pillars are omitted
	} \label{fig:key}
\end{figure}
\begin{theorem}	\label{thm:key}
	Let $H$ be a graph with smallest Seidel eigenvalue at least $-5$ having a maximum clique $B=\{b_1,\ldots,b_5\}$.
	Assume that the $(5,2)$-pillar $P_{B,\{b_1,b_2\}}$ is isomorphic to $m K_2$ for some non-negative integer $m$,
	and assume one of the following.
	\begin{enumerate}
		\item[\textup{(I)}] The $(5,1)$-pillar $P_{B,\{ b_1 \}}$ contains non-adjacent vertices.
		\item[\textup{(II)}] Both $(5,1)$-pillars $P_{B,\{ b_1\}}$ and $P_{B,\{ b_2\}}$ contain at least one vertex.
	\end{enumerate}
	If the $(5,2)$-pillar $P_{B,\{b_3,b_4\}}$ contains at least one edge, then $m \leq 8$.
\end{theorem}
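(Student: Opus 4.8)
The plan is to argue entirely with the projected vectors $\bar x$. Since the smallest Seidel eigenvalue of $H$ is at least $-5$, the vectors $\hat x$ satisfy $(\hat x,\hat x)=5$ and $(\hat x,\hat y)=\mp 1$ according as $x,y$ are adjacent or not, and the Gram matrix of $\hat b_1,\dots,\hat b_5$ is $6I-J$ with inverse $\tfrac16(I+J)$, so every projection is computable. The data I would record first: a $(5,2)$-pillar vertex has $(\bar x,\bar x)=4$ and a $(5,1)$-pillar vertex has $(\bar x,\bar x)=8/3$; inside $P_{B,\{b_1,b_2\}}$ one has $(\bar x,\bar y)=-2$ on an edge and $0$ on a non-edge, so $P_{B,\{b_1,b_2\}}\cong mK_2$ produces $m$ mutually orthogonal planes $T_i=\langle \bar f_i,\bar g_i\rangle$ (one per edge $\{f_i,g_i\}$), each carrying the Gram matrix $\bigl(\begin{smallmatrix}4&-2\\-2&4\end{smallmatrix}\bigr)$; for $w\in P_{B,\{b_1\}}$, $y\in P_{B,\{b_1,b_2\}}$ one has $(\bar w,\bar y)\in\{-2,0\}$; for $x\in P_{B,\{b_3,b_4\}}$, $y\in P_{B,\{b_1,b_2\}}$ one has $(\bar x,\bar y)\in\{-2/3,4/3\}$; for $w\in P_{B,\{b_1\}}$, $x\in P_{B,\{b_3,b_4\}}$ one has $(\bar w,\bar x)\in\{-4/3,2/3\}$; two vertices of a single $(5,1)$-pillar are automatically non-adjacent (adjacency would force $(\bar w,\bar w')=-10/3$, impossible for norm $8/3$) and satisfy $(\bar w,\bar w')=-4/3$; and two vertices of $P_{B,\{b_1\}}$ and $P_{B,\{b_2\}}$ satisfy $(\bar w,\bar w')\in\{-8/3,-2/3\}$. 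In each case the first value corresponds to an edge of $H$.

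By Theorem~\ref{thm:2m+n}(i) we have $2m\le 18$, so it suffices to rule out $m=9$; assume $m=9$ and put $T=\bigoplus_i T_i$. For a vertex $u$ of the guaranteed edge in $P_{B,\{b_3,b_4\}}$, the squared length of the projection of $\bar u$ onto $T_i$ is $16/9$ if $u$ meets neither endpoint of the $i$-th edge and $4/9$ otherwise; since these sum to at most $(\bar u,\bar u)=4$, a direct count forces $u$ to be adjacent to at least one endpoint of every edge and forces equality, i.e.\ $\bar u\in T$. On each plane the projection $\bar u^{(i)}$ is then one of $\tfrac13\bar g_i$, $\tfrac13\bar f_i$, $-\tfrac13(\bar f_i+\bar g_i)$ (a ``selection''), and a short computation gives $(\bar u^{(i)},\bar u'^{(i)})=4/9$ when $u,u'$ select alike and $-2/9$ otherwise; hence $(\bar u,\bar u')=-2$ forces $u$ and $u'$ to select differently on all nine planes.

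I would then pin down every $(5,1)$-pillar vertex $w$. The same projection estimate (now using $(\bar w,\bar w)=8/3$) shows $w$ meets at most one endpoint of each edge and at most two edges in total. Because $\bar u\in T$ and $(\bar w,\bar u)\in\{-4/3,2/3\}$ is nonzero, $w$ meets at least one edge; and if $w$ met exactly one edge, the only admissible plane-contribution $2/3$ would force both $u$ and $u'$ to select ``both'' there, contradicting the previous paragraph. Thus $w$ meets exactly two edges, so $\bar w\in T$ as well. The crux is a final sign computation: for two $(5,1)$-pillar vertices lying in $T$ and meeting each edge in at most one endpoint, the plane-wise inner products $(\bar w^{(i)},\bar w'^{(i)})$ equal $4/3$ or $2/3$ on a common edge and $0$ otherwise, all nonnegative, whence $(\bar w,\bar w')\ge 0$. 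This contradicts the negative value required of $(\bar w,\bar w')$: $-4/3$ under hypothesis~(I) (two vertices of $P_{B,\{b_1\}}$) and $-8/3$ or $-2/3$ under hypothesis~(II) (vertices of $P_{B,\{b_1\}}$ and $P_{B,\{b_2\}}$).

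The main obstacle is the bookkeeping of the selections at $m=9$: one must extract the rigidity $\bar u,\bar u'\in T$ with opposite selections on every plane, and the rigidity $\bar w\in T$ meeting exactly two edges, and then show these make the $(5,1)$-pillar inner product nonnegative while the Gram data force it negative. Everything else is the explicit projection arithmetic of the first paragraph, which I would isolate as short lemmas (the norms $(\bar x,\bar x)$, the per-plane projection lengths, and the selection inner products) so that the two cases~(I) and~(II) collapse to the single inequality $0\le(\bar w,\bar w')<0$.
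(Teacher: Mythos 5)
Your proof is correct, and it takes a genuinely different route from the paper's. Both arguments start from the projection arithmetic of Lemma~\ref{lem:inner products} and from positive semidefiniteness of the Gram matrix of $\bar{z}_1,\bar{z}_2,\bar{y}_1,\bar{y}_2,\bar{x}_{1,1},\ldots,\bar{x}_{m,2}$, and both only need to exclude $m=9$ (you import $m\le 9$ from Theorem~\ref{thm:2m+n}(i), while the paper re-derives it as Theorem~\ref{thm:m=9}). From there the paper takes the Schur complement of the block-diagonal $mK_2$ block (Lemma~\ref{lem:Schur}), reducing to a $4\times 4$ matrix $\Delta$, and runs a case analysis over the seven adjacency patterns $B^{(1)}_{21},\ldots,B^{(7)}_{21}$ between the edge $\{y_1,y_2\}$ and the pair $\{z_1,z_2\}$, choosing test vectors case by case and, in the hardest case, pinning down the multiplicities $a(C_j)$ via the equality analysis of Theorem~\ref{thm:m=9} and Lemma~\ref{lem:4} before exhibiting a negative minor. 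You instead exploit the orthogonal decomposition $T=\bigoplus_i T_i$ directly: Bessel's inequality at $m=9$ forces $\bar{u},\bar{u}'$ into $T$ with opposite selections on every plane, then forces each $(5,1)$-pillar vector into $T$ meeting exactly two planes, and the contradiction is the single sign inequality $0\le(\bar{w},\bar{w}')<0$, which is blind to the $y$--$z$ adjacencies and hence collapses cases (I) and (II), and all seven patterns, into one computation. The two are of course kin --- your Bessel defect is the paper's Schur complement, and ``$\bar{w}\in T$'' is the vanishing of a diagonal entry of $\Delta$ --- but your organization eliminates the case analysis of Lemmas~\ref{lem:2}--\ref{lem:4}, at the price of quoting $2m+n\le 18$ rather than deriving $m\le 9$ internally. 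I checked your numerical claims (per-plane projection norms $16/9$, $4/9$, $4/3$; selection inner products $4/9$ and $-2/9$; final plane-wise products $4/3$, $2/3$, $0$; and the pillar inner products, which are $2/3$ times the paper's $\tfrac{15}{2}(\cdot,\cdot)$ values owing to your norm-$5$ normalization) and they are consistent with Lemma~\ref{lem:inner products}.
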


Our strategy for proving this theorem is to consider an upper bound on $m$ under the condition that the Gram matrix of the vectors $\bar{x}$'s is positive semidefinite, where $x$'s are some vertices of a graph as in Figure~\ref{fig:key}.
In Subsection~\ref{subsec:1}, we provide a lemma which gives the inner products of $\bar{x}$'s, and introduce some notations to represent candidates for the Gram matrix of $\bar{x}$'s.
In Subsection~\ref{subsec:2}, we consider an easier case where there are no $(5,1)$-pillars containing vertices in Figure~\ref{fig:key} precisely.
In Subsection~\ref{subsec:3}, we consider the case where $(5,1)$-pillars containing vertices and $(5,2)$-pillars containing vertices coexist as in Figure~\ref{fig:key}, and complete the proof of Theorem~\ref{thm:key}.

\subsection{The Gram matrix of $\bar{x}$'s}	\label{subsec:1}
In this subsection, we prepare to write down the Gram matrix of the vectors $\bar{x}$'s corresponding to some vertices $x$'s of a graph in Figure~\ref{fig:key}.
King and Tang calculated the inner products of such vectors to obtain upper bounds on the orders of pillars~\cite{KT2019}.
By~\cite[Proposition~3.10]{KT2019}, we have the following lemma.
\begin{lemma}	\label{lem:inner products}
	Let $H$ be a graph with smallest Seidel eigenvalue at least $-5$ having a maximum clique $B=\{b_1,\ldots,b_5\}$.
	For $\{i,j,k,l,m\} = \{1,\ldots,5\}$ the following hold.
	\begin{enumerate}
		\item For a vertex $x$ in the $(5,2)$-pillar $P_{B,\{b_i,b_j\}}$, the orthogonal projection of $\hat{x}$ onto $\langle \hat{b} : b \in B \rangle$ is
		      $
			      \frac{1}{3} \left( \hat{b}_k + \hat{b}_l + \hat{b}_m \right).
		      $
		\item For a vertex $x$ in the $(5,1)$-pillar $P_{B,\{b_i\}}$, the orthogonal projection of $\hat{x}$ onto $\langle \hat{b} : b \in B \rangle$ is
		      $
			      \frac{1}{3} \left( \hat{b}_i + 2\hat{b}_j + 2\hat{b}_k + 2\hat{b}_l + 2\hat{b}_m \right).
		      $
	\end{enumerate}
	In particular, we have the following inner products.
	\begin{enumerate}
		\item[(iii)] For $x,y \in P_{B,\{b_i,b_j\}}$ and $z,w \in P_{B,\{b_i\}}$, the following hold.
		      \begin{align*}
			      \frac{15}{2} \cdot (\bar{x},\bar{y}) = \begin{cases}
				                                             6  & \text{ if } x = y,        \\
				                                             -3 & \text{ if } x \sim y,     \\
				                                             0  & \text{ if } x \not\sim y.
			                                             \end{cases}
			       &  &
			      \frac{15}{2} \cdot (\bar{z},\bar{w}) = \begin{cases}
				                                             4  & \text{ if } z = w,        \\
				                                             -2 & \text{ if } z \not\sim w.
			                                             \end{cases}
		      \end{align*}
		\item[(iv)] For $x \in P_{B,\{b_i,b_j\}}, y \in P_{B,\{b_k,b_l\}}, z \in P_{B,\{b_i\}}, w \in P_{B,\{b_j\}}$, the following hold.
		      \begin{align*}
			       & \frac{15}{2} \cdot (\bar{x},\bar{y}) = \begin{cases}
				                                                -1 & \text{ if } x \sim y,     \\
				                                                2  & \text{ if } x \not\sim y.
			                                                \end{cases}
			       &                                                       &
			      \frac{15}{2} \cdot (\bar{x},\bar{z}) = \begin{cases}
				                                             -3 & \text{ if } x \sim z,     \\
				                                             0  & \text{ if } x \not\sim z.
			                                             \end{cases}
			      \\
			       & \frac{15}{2} \cdot (\bar{y},\bar{z}) = \begin{cases}
				                                                -2 & \text{ if } y \sim z,     \\
				                                                1  & \text{ if } y \not\sim z.
			                                                \end{cases}
			       &                                                       & \frac{15}{2} \cdot (\bar{z},\bar{w}) = \begin{cases}
				                                                                                                        -4 & \text{ if } z \sim w,     \\
				                                                                                                        -1 & \text{ if } z \not\sim w.
			                                                                                                        \end{cases}
		      \end{align*}
	\end{enumerate}
\end{lemma}
In the remainder of this subsection, we introduce notation mainly to represent candidates for the Gram matrix $G$ of
$\bar{z}_1$, $\bar{z}_2$, $\bar{y}_1$, $\bar{y}_2$, $\bar{x}_{1,1}$, $\bar{x}_{1,2}, \ldots, \bar{x}_{m,1}$ and $\bar{x}_{m,2}$ in Figure~\ref{fig:key}.
We first describe the structure of the Gram matrix.
We then introduce notation and, as an example, use it to write down the Gram matrix for the graph in Figure~\ref{fig:exQ}, which is a specific instance of case~\textup{(I)} in Figure~\ref{fig:key}.
Note that in Subsection~\ref{subsec:2} we use the same notation to represent the Gram matrix of different vectors arising in a simpler situation.

\begin{figure}[hbtp]
	\begin{tikzpicture}[baseline=7,xscale=0.6,yscale=0.6,every node/.style={fill=black,circle,minimum size=5 pt,inner sep=0pt}]
		\def\R{1}
		\node[] (b1) at ($(0:\R)$) {};
		\node[] (b2) at ($(72:\R)$) {};
		\node[] (b3) at ($(2*72:\R)$) {};
		\node[] (b4) at ($(3*72:\R)$) {};
		\node[] (b5) at ($(4*72:\R)$) {};

		\def\r{1.6}
		{
			\node[fill=white, rectangle] () at (-12:\r) {$b_1$};
			\node[fill=white, rectangle] () at (89:\r) {$b_2$};
			\node[fill=white, rectangle] () at (149:\r) {$b_3$};
			\node[fill=white, rectangle] () at (231:\r) {$b_4$};
			\node[fill=white, rectangle] () at (303:\r) {$b_5$};
		}

		\foreach \i in {b1,b2,b3,b4,b5}{
				\foreach \j in {b1,b2,b3,b4,b5}{
						\draw (\i)--(\j);
					}
			}
		\coordinate (Z) at (0:4);
		\draw (b1)--($(Z)$);
		\draw[fill=white] ($(Z)$) ellipse [x radius=1, y radius=1];
		\draw ($(Z)+(0,-1.5)$) node[fill=white, rectangle] { $P_{B,\{b_1 \}}$};

		\coordinate (Y) at (140:4);
		\draw (b3)--($(Y)$);
		\draw (b4)--($(Y)$);
		\draw[fill=white] ($(Y)$) ellipse [x radius=1.3, y radius=1.3];
		\draw ($(Y)+(0,+1.8)$) node[fill=none, rectangle] { $P_{B,\{b_3, b_4 \}}$};

		\coordinate (X) at (50:5);
		\draw (b1)--($(X)$);
		\draw (b2)--($(X)$);
		\draw[fill=white] ($(X)$) ellipse [x radius=2.5, y radius=2.5];
		\draw ($(X)+(0,3)$) node[fill=white, rectangle] { $P_{B,\{b_1, b_2 \}}$};

		\node[label = below: $z_{2}$] (z2) at ($(Z)+(0.4,0)$) {};
		\node[label = above: $z_{1}$] (z1) at ($(Z)+(-0.4,0)$) {};
		\node[label = left: $y_{2}$] (y2) at ($(Y)+(0,0.4)$) {};
		\node[label = left: $y_{1}$] (y1) at ($(Y)+(0,-0.4)$) {};
		\draw (y1)--(y2);

		\node[label = right: $x_{1,2}$] (x12) at ($ (X)+(0,1.3-0.3)$) {};
		\node[label = north east: $x_{1,1}$] (x11) at ($(X)+(0,1.3+0.3)$) {};
		\node[label = right: $x_{2,2}$] (x22) at ($(X)+(0,-0.3)$) {};
		\node[label = right: $x_{2,1}$] (x21) at ($(X)+(0,0.3)$) {};
		\node[label = right: $x_{3,2}$] (x32) at ($(X)+(0,-1.3-0.3)$) {};
		\node[label = right: $x_{3,1}$] (x31) at ($(X)+(-0,-1.3+0.3)$) {};
		\draw (x11)--(x12) (x21)--(x22) (x31)--(x32);

		\draw (z1) .. controls (6,1) and (6,6) .. (x11);
		\draw (z1) .. controls (2,-3) and (-4,-3) .. (y1);
		\draw (y1) to[bend left=0] (x22);
		\draw (y1) to[bend left=0] (x32);
		\draw (y2) to[bend left=0] (x11);
		\draw (y2) to[bend left=0] (x12);
		\draw (y2) to[bend left=0] (x21);
		\draw (y2) to[bend left=0] (x31);
	\end{tikzpicture}
	\caption{
		An example of an induced subgraph in the case \textup{(I)} of Theorem~\ref{thm:key}
	}\label{fig:exQ}
\end{figure}

We describe the structure of the Gram matrix $G$, whose entries are calculated using Lemma~\ref{lem:inner products}.
For computational convenience, we work with the integer matrix $Q:=(15/2)G$.
This matrix $Q$ can be written for some matrices $Q_{11}$, $Q_{22}$, $A_1, \ldots, A_m$ as follows.
	{\renewcommand{\arraystretch}{1.5}
		\[
			Q =
			\begin{tabular}{c cccc ccccc}
			& $\bar{z}_1$ & $\bar{z}_2$ & $\bar{y}_1$ & $\bar{y}_2$ & $\bar{x}_{1,1}$ & $\bar{x}_{1,2}$ & $\dots$ & $\bar{x}_{m,1}$ & $\bar{x}_{m,2}$ \\
			\cline{2-10}
		$\bar{z}_1$ & \multicolumn{4}{|c|}{\multirow{4}{*}{$Q_{11}$}} & \multicolumn{2}{|c|}{\multirow{4}{*}{$A_1^\top$}} & \multicolumn{1}{|c|}{\multirow{4}{*}{$\cdots$}} & \multicolumn{2}{|c|}{\multirow{4}{*}{$A_m^\top$}} \\
		$\bar{z}_2$ & \multicolumn{1}{|c}{}& & & \multicolumn{1}{c|}{} & & \multicolumn{1}{c|}{} & \multicolumn{1}{c|}{} & & \multicolumn{1}{c|}{} \\
		$\bar{y}_1$ & \multicolumn{1}{|c}{}& & & \multicolumn{1}{c|}{} & & \multicolumn{1}{c|}{} & \multicolumn{1}{c|}{} & & \multicolumn{1}{c|}{} \\
		$\bar{y}_2$ & \multicolumn{1}{|c}{}& & & \multicolumn{1}{c|}{} & & \multicolumn{1}{c|}{} & \multicolumn{1}{c|}{} & & \multicolumn{1}{c|}{} \\
			\cline{2-10}
		$\bar{x}_{1,1}$ & \multicolumn{4}{|c|}{\multirow{2}{*}{$A_1$}} & \multicolumn{5}{|c|}{\multirow{5}{*}{$Q_{22}$}} \\
		$\bar{x}_{1,2}$ & \multicolumn{1}{|c}{}& & & \multicolumn{1}{c|}{} & & & & & \multicolumn{1}{c|}{} \\
			\cline{2-5}
		$\vdots$ & \multicolumn{4}{|c|}{$\vdots$} & & & & & \multicolumn{1}{c|}{} \\
			\cline{2-5}
		$\bar{x}_{m,1}$ & \multicolumn{4}{|c|}{\multirow{2}{*}{$A_m$}} & & & & & \multicolumn{1}{c|}{} \\
		$\bar{x}_{m,2}$ & \multicolumn{1}{|c}{}& & & \multicolumn{1}{c|}{} & & & & & \multicolumn{1}{c|}{} \\
			\cline{2-10}\\
			\end{tabular}
		\]}
Here, the entries of $Q$ are $15/2$ times the corresponding inner products among vectors associated with labels.
The matrix $Q_{11}$ is determined by the adjacency relations among $z_1$, $z_2$, $y_1$ and $y_2$.
Also, the  matrix $Q_{22}$ equals $(9I_2-3J_2)^{\oplus m}$.
For $i\in\{1,\ldots,m\}$, the matrix $A_i$ is determined by the adjacency relations between $\{x_{i,1},x_{i,2}\}$ and $\{z_1,z_2,y_1,y_2\}$.
We define a function $a$ whose domain is the set of candidate $2\times 4$ matrices for $A_i$.
For each such matrix $A$, the value $a(A)$ is the number of indices $i$ for which $A_i=A$.
Then, since $m$ is the sum of values of $a$, we see that the matrix $Q$ is determined by $Q_{11}$ and $a$.

We now introduce notation to represent the matrix $Q$, which depends only on $Q_{11}$ and $a$.
Let $s,t$ and $r$ be positive integers, and $Z$ and $Y$ be sets of numbers.
Denote by $M_{r,s}(Z)$ the set of $r \times s$-matrices all of whose entries are in $Z$,
and write $M_r(Z)$ for $M_{r,s}(Z)$ if $r=s$.
Denote by $M_{r,s,t}(Z,Y)$ the set of $r \times (s+t)$-matrices obtained by joining a matrix in $M_{r,s}(Z)$ and one in $M_{r,t}(Y)$ horizontally.
For example,
\begin{align*}
	M_{2,1,2}(\{0,1\},\{2,3\}) = \left\{ \begin{bmatrix} i & e & f \\ j & g & h \end{bmatrix} : i,j \in \{0,1\}, e,f,g,h \in \{2,3\} \right\}.
\end{align*}
Let  $M$ be a finite set of $2 \times r$ matrices, and $a : M \to \Z_{\geq 0}$ a function.
Define $Q_{21} = Q_{21}\left(a \right)$ as the matrix obtained by joining all $a(A)$ copies of $A \in M$  vertically.
Let $m$ be the sum of values of the function $a$, and define
\begin{align*}
	Q_{22}=Q_{22}(m) := (9I_2-3J_2)^{\oplus m} =  \begin{bmatrix}
		                                              6 & -3 \\ -3 & 6
	                                              \end{bmatrix}^{\oplus m}.
\end{align*}
In addition, let $Q_{11}$ be an $r \times r$ matrix, and define
\begin{align}
	Q = Q\left(Q_{11} ; a\right) :=
	\begin{bmatrix}
		Q_{11} & Q_{21}^\top \\ Q_{21} & Q_{22}
	\end{bmatrix}.
\end{align}

As an example, we consider the graph in Figure~\ref{fig:exQ}, and write down the Gram matrix of the vectors $\bar{z}_1$, $\bar{z}_2$, $\bar{y}_1$, $\bar{y}_2$, $\bar{x}_{1,1}$, $\bar{x}_{1,2}$ , $\bar{x}_{2,1}$, $\bar{x}_{2,2}$ , $\bar{x}_{3,1}$ and $\bar{x}_{3,2}$.
We let $a : M_{2,2,2}(\{0,-3\},\{2,-1\}) \to \Z_{\geq 0}$ be a function such that
$$a\left( \begin{bmatrix} -3 & 0 & 2 & -1 \\ 0 & 0 & 2 & -1 \end{bmatrix}\right)=1, \qquad a\left( \begin{bmatrix} 0 & 0 & 2 & -1 \\ 0 & 0 & -1 & 2 \end{bmatrix}\right)=2,$$ and $a$ takes $0$ on the other matrices.
Then
\begin{align*}
	Q_{21}\left( a \right)
	=
	\left[
		\begin{array}{cccc}
			-3 & 0 & 2 & -1 \\ 0 & 0 & 2 & -1 \\
			\hline
			0  & 0 & 2 & -1 \\ 0 & 0 & -1 & 2 \\
			\hline
			0  & 0 & 2 & -1 \\ 0 & 0 & -1 & 2
		\end{array}
		\right]
\end{align*}
and
\begin{align}\label{eq:Q}
	Q\left(
	\begin{bmatrix}
		4  & -2 & -2 & 1  \\
		-2 & 4  & 1  & 1  \\
		-2 & 1  & 6  & -3 \\
		1  & 1  & -3 & 6
	\end{bmatrix}
	;a \right)
	=
	\left[
		\begin{array}{cccc|cc|cc|cc}
			4  & -2 & -2 & 1  & -3 & 0  & 0  & 0  & 0  & 0  \\
			-2 & 4  & 1  & 1  & 0  & 0  & 0  & 0  & 0  & 0  \\
			-2 & 1  & 6  & -3 & 2  & 2  & 2  & -1 & 2  & -1 \\
			1  & 1  & -3 & 6  & -1 & -1 & -1 & 2  & -1 & 2  \\
			\hline
			-3 & 0  & 2  & -1 & 6  & -3 & 0  & 0  & 0  & 0  \\
			0  & 0  & 2  & -1 & -3 & 6  & 0  & 0  & 0  & 0  \\
			\hline
			0  & 0  & 2  & -1 & 0  & 0  & 6  & -3 & 0  & 0  \\
			0  & 0  & -1 & 2  & 0  & 0  & -3 & 6  & 0  & 0  \\
			\hline
			0  & 0  & 2  & -1 & 0  & 0  & 0  & 0  & 6  & -3 \\
			0  & 0  & -1 & 2  & 0  & 0  & 0  & 0  & -3 & 6
		\end{array}
		\right]
\end{align}
By Lemma~\ref{lem:inner products}, we see that this matrix multiplied by $2/15$ equals the Gram matrix of $\bar{z}_1$, $\bar{z}_2$, $\bar{y}_1$, $\bar{y}_2$, $\bar{x}_{1,1}$, $\bar{x}_{1,2}$ , $\bar{x}_{2,1}$, $\bar{x}_{2,2}$ , $\bar{x}_{3,1}$, $\bar{x}_{3,2}$ in Figure~\ref{fig:exQ}.

\subsection{An upper bound on $m$ without $(5,1)$-pillars}	\label{subsec:2}
In this subsection, we provide Theorem~\ref{thm:m=9} to precisely consider the case where the $(5,1)$-pillars contain no vertices as shown in Figure~\ref{fig:key2}.
In this theorem, the function $a$ is determined by the adjacency relations between $\{ y_1, y_2\}$ and $\{ x_{1,1}, x_{1,2}, \ldots, x_{m,1}, x_{m,2} \}$ in the graph in Figure~\ref{fig:key2}.
\begin{figure}[hbtp]
	\begin{tikzpicture}[baseline=7,xscale=0.6,yscale=0.6,every node/.style={fill=black,circle,minimum size=5 pt,inner sep=0pt}]
		\def\R{1}
		\node[] (b1) at (0:\R) {};
		\node[] (b2) at (72:\R) {};
		\node[] (b3) at ($(2*72:\R)$) {};
		\node[] (b4) at ($(3*72:\R)$) {};
		\node[] (b5) at ($(4*72:\R)$) {};

		\def\r{1.6}
		{\
			\node[fill=white, rectangle] () at (22:\r) {$b_1$};
			\node[fill=white, rectangle] () at (93:\r) {$b_2$};
			\node[fill=white, rectangle] () at (149:\r) {$b_3$};
			\node[fill=white, rectangle] () at (231:\r) {$b_4$};
			\node[fill=white, rectangle] () at (303:\r) {$b_5$};
		}

		\foreach \i in {b1,b2,b3,b4,b5}{
				\foreach \j in {b1,b2,b3,b4,b5}{
						\draw (\i)--(\j);
					}
			}
		\def\an{16}
		\draw ($(0:\R)$)--($(\an:4.5)$);
		\draw ($(72:\R)$)--($(\an:4.5)$);
		\draw[fill=white] ($(\an:4.5)$) ellipse [x radius=2.5, y radius=2.5];
		\draw ($(\an:4.5)+(0,-3)$) node[fill=white, rectangle] { $P_{B,\{b_1, b_2\}}$};

		\draw ($(144:\R)$)--($(180:3)$);
		\draw ($(216:\R)$)--($(180:3)$);
		\draw[fill=white] ($(180:3)$) ellipse [x radius=1.3, y radius=1.3];
		\draw ($(180:3)+(0,-1.8)$) node[fill=white, rectangle] { $P_{B,\{b_3, b_4\}}$};

		\node[label = above: $y_{2}$] (c1) at ($(180:3)+(0.4,0)$) {};
		\node[label = above: $y_{1}$] (c2) at ($(180:3)+(-0.4,0)$) {};
		\draw (c1)--(c2);

		\node[label = right: $x_{1,2}$] (a1) at ($(\an:4.5)+(0.4,1)$) {};
		\node[label = left: $x_{1,1}$] (a2) at ($(\an:4.5)+(-0.4,1)$) {};
		\node[label = right: $x_{2,2}$] (a3) at ($(\an:4.5)+(0.4,0.2)$) {};
		\node[label = left: $x_{2,1}$] (a4) at ($(\an:4.5)+(-0.4,0.2)$) {};
		\draw ($(\an:4.5)+(0,-0.2)$) node[fill=white, rectangle] { $\vdots$};
		\node[label = right: $x_{m,2}$] (a5) at ($(\an:4.5)+(0.4,-1)$) {};
		\node[label = left: $x_{m,1}$] (a6) at ($(\an:4.5)+(-0.4,-1)$) {};
		\draw (a1)--(a2) (a3)--(a4) (a5)--(a6);
	\end{tikzpicture}
	\caption{The graph considered in Theorem~\ref{thm:m=9}, where the edges between pillars are not shown} \label{fig:key2}
\end{figure}
Although the theorem has essentially been proved in~\cite[Proof of Theorem~5.3]{lemmens1973},
we provide a proof for the convenience of the readers after definitions and a lemma.
For each $2 \times r$-matrix $A$, let $\bar{A}$ be the matrix obtained from $A$ by exchanging the first and second row.
Let
$$
	\cM := \left\{ A, \bar{A} : A \in \left\{
	\begin{bmatrix} 2 & -1 \\ -1& 2 \end{bmatrix},
	\begin{bmatrix} 2 & -1 \\ -1 & -1 \end{bmatrix},
	\begin{bmatrix} -1 & 2 \\ -1 & -1 \end{bmatrix}
	\right\} \right\}.
$$
\begin{lemma}[{\cite[Theorem~2.7.1]{brouwer2011spectra}}]	\label{lem:Schur}
	Let $C$ be a positive definite matrix.
	Then a symmetric matrix
	$
		\begin{bmatrix}
			A & B \\ B^\top & C
		\end{bmatrix}
	$
	is positive semidefinite if and only if $A-B C^{-1} B^\top$ is positive semidefinite.
\end{lemma}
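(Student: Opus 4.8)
The plan is to prove this classical Schur-complement fact via an explicit congruence transformation that block-diagonalizes the matrix, thereby reducing positive semidefiniteness of the whole matrix to that of its diagonal blocks. The crucial observation is that since $C$ is positive definite, it is invertible, so we can perform a block Gaussian elimination to clear out the off-diagonal blocks $B$ and $B^\top$.

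First I would introduce the invertible block matrix
\[
	P := \begin{bmatrix} I & -BC^{-1} \\ 0 & I \end{bmatrix},
\]
which has determinant $1$. A direct computation then shows that
\[
	P \begin{bmatrix} A & B \\ B^\top & C \end{bmatrix} P^\top = \begin{bmatrix} A - B C^{-1} B^\top & 0 \\ 0 & C \end{bmatrix},
\]
where one uses the symmetry of $C$ (so that $(C^{-1})^\top = C^{-1}$) when checking that the off-diagonal blocks vanish after both multiplications.

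Next I would invoke the fact that congruence by an invertible matrix preserves positive semidefiniteness: writing $M$ for the original matrix, for any vector $\bv$ we set $\bw := P^\top \bv$ and obtain $\bv^\top (P M P^\top) \bv = \bw^\top M \bw$; since $\bv \mapsto P^\top \bv$ is a bijection, $M$ is positive semidefinite if and only if $P M P^\top$ is. As the latter is block diagonal, it is positive semidefinite precisely when both diagonal blocks $A - B C^{-1} B^\top$ and $C$ are positive semidefinite.

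Finally, the hypothesis that $C$ is positive definite forces $C$ to be positive semidefinite, so the condition on the second block is automatically satisfied. Therefore $M$ is positive semidefinite exactly when $A - B C^{-1} B^\top$ is positive semidefinite, which is the desired equivalence. The only step demanding any care is the matrix multiplication establishing the block-diagonalization; beyond this routine computation there is no genuine obstacle, since the statement is a standard fact about Schur complements.
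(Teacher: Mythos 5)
Your proof is correct: the congruence by $P$ does block-diagonalize the matrix (the computation checks out, using the symmetry of $C$), congruence by an invertible matrix preserves positive semidefiniteness, and the block $C$ is automatically positive semidefinite by hypothesis. The paper gives no proof of this lemma at all---it is quoted directly from Brouwer--Haemers \cite[Theorem~2.7.1]{brouwer2011spectra}---and your argument is exactly the standard Schur-complement proof found there, so there is nothing further to compare.
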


\begin{theorem}	\label{thm:m=9}
	Let $a : M_{2}(\{-1,2\}) \to \Z_{\geq 0}$ be a function.
	Let $m$ be the sum of values of $a$.
	If
	$
		Q\left( 9I_2-3J_2; a \right)
	$
	is positive semidefinite, then $m \leq 9$.
	Furthermore, if equality holds, then $a(A) = 0$ for
	$
		A \not\in \cM.
	$
\end{theorem}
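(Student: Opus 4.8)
The plan is to apply the Schur complement criterion of Lemma~\ref{lem:Schur}. Abbreviate $C := 9I_2 - 3J_2 = \begin{bmatrix} 6 & -3 \\ -3 & 6 \end{bmatrix}$, so that in $Q(9I_2-3J_2;a)$ we have $Q_{11} = C$ and $Q_{22} = C^{\oplus m}$. Since $C$ has eigenvalues $9$ and $3$, it is positive definite, hence so is $Q_{22}$, with inverse $(C^{-1})^{\oplus m}$ where $C^{-1} = \frac{1}{9}(I_2 + J_2)$. Writing $Q_{21}$ as the vertical stack of the matrices $A \in M_2(\{-1,2\})$, each occurring $a(A)$ times, the product $Q_{21}^\top Q_{22}^{-1} Q_{21}$ splits into blockwise contributions, giving
\[
	Q_{21}^\top Q_{22}^{-1} Q_{21} = \sum_{A} a(A)\, A^\top C^{-1} A.
\]
By Lemma~\ref{lem:Schur}, therefore, $Q(9I_2-3J_2;a)$ is positive semidefinite if and only if the Schur complement $S := C - \sum_A a(A)\, A^\top C^{-1} A$ is positive semidefinite.

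The bound $m \le 9$ comes from a trace computation. For $A = \begin{bmatrix} p & q \\ r & s \end{bmatrix}$, using $C^{-1} = \frac{1}{9}(I_2 + J_2)$ one finds
\[
	\tr\bigl(A^\top C^{-1} A\bigr) = \frac{2}{9}\bigl[ (p^2 + pr + r^2) + (q^2 + qs + s^2) \bigr].
\]
The elementary fact driving everything is that for $x,y \in \{-1,2\}$ we have $x^2 + xy + y^2 = 3$ unless $x = y = 2$, in which case it equals $12$; in particular $x^2+xy+y^2 \ge 3$. Thus $\tr(A^\top C^{-1}A) \ge \frac{2}{9}(3+3) = \frac{4}{3}$ for every $A \in M_2(\{-1,2\})$. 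Since $S$ is positive semidefinite its trace is non-negative, and as $\tr(C) = 12$ we obtain
\[
	0 \le \tr(S) = 12 - \sum_A a(A)\,\tr\bigl(A^\top C^{-1} A\bigr) \le 12 - \frac{4}{3} m,
\]
whence $m \le 9$.

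For the equality case $m = 9$, the displayed chain forces $\tr(S) = 0$, and a positive semidefinite matrix of trace zero vanishes, so $S = 0$. In particular $\tr(S) = 0$ together with the trace formula forces each $A$ in the support of $a$ to meet the lower bound $\frac{4}{3}$, i.e.\ to have neither column equal to $(2,2)^\top$; this confines the support to the nine matrices with both columns in $\{(-1,-1)^\top,(-1,2)^\top,(2,-1)^\top\}$. Exactly six of these are the elements of $\cM$, and the remaining three are $\begin{bmatrix} -1 & -1 \\ -1 & -1 \end{bmatrix}$, $\begin{bmatrix} -1 & -1 \\ 2 & 2 \end{bmatrix}$ and $\begin{bmatrix} 2 & 2 \\ -1 & -1 \end{bmatrix}$. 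To rule these out I would use the off-diagonal entry of $S = 0$: a short computation gives the $(1,2)$-entry of $A^\top C^{-1} A$ as $\frac{1}{9}(2pq + ps + qr + 2rs)$, which equals $-\frac{1}{3}$ for every $A \in \cM$ but $+\frac{2}{3}$ for each of the three exceptional matrices. Letting $N'$ be the total $a$-weight on the exceptional matrices, the vanishing off-diagonal entry of $S$ reads $\sum_A a(A)(2pq+ps+qr+2rs) = -27$, which with $m = 9$ collapses to $9N' = 0$; hence $N' = 0$ and $a(A) = 0$ for all $A \notin \cM$.

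The explicit evaluations of the diagonal and off-diagonal entries of $A^\top C^{-1} A$ are routine. The one genuinely delicate point is that the trace inequality by itself is not sharp enough to identify the support in the equality case: three extraneous matrices also attain the minimal trace $\frac{4}{3}$, and only the off-diagonal entry of the Schur complement distinguishes them from the members of $\cM$.
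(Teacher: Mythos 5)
Your proof is correct and takes essentially the same route as the paper: reduce via Lemma~\ref{lem:Schur} to positive semidefiniteness of the $2\times 2$ Schur complement and bound nonnegative linear functionals of it. The only difference is packaging: the paper evaluates the single weighted form $3\bone^\top\Delta\bone+\begin{bmatrix}1&-1\end{bmatrix}\Delta\begin{bmatrix}1&-1\end{bmatrix}^\top$ (equivalently $4\tr(\Delta)+4\Delta_{12}$), which yields $m\le 9$ and the support condition $a(A)=0$ for $A\notin\cM$ in one inequality, whereas you obtain the bound from the trace alone and then invoke the vanishing off-diagonal entry of the zero Schur complement to eliminate the three extra minimal-trace matrices.
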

\begin{proof}
	Since $Q\left( 9I_2-3J_2; a \right)$ is positive semidefinite, Lemma~\ref{lem:Schur} implies that
	\begin{align*}
		\Delta :=
		(9I_2-3J_2)
		- \sum_{A \in M_{2}(\{-1,2\})}
		a(A) \cdot
		A^\top
		  (9I_2-3J_2)^{-1}
		A
	\end{align*}
	is positive semidefinite.
	Since $M_2(\{-1,2\})$ consists of $6$ matrices in $\cM$ and the other $10$ matrices explicitly appearing below,
	we have
	\begin{align*}
		0 & \leq
		3\begin{bmatrix} 1 \\ 1 \end{bmatrix}^\top
		\Delta
		\begin{bmatrix} 1 \\ 1 \end{bmatrix}
		+
		\begin{bmatrix} 1 \\ -1 \end{bmatrix}^\top
		\Delta
		\begin{bmatrix} 1 \\ -1 \end{bmatrix}                            \\
		  & = 36
		- 4 \sum_{A \in \cM} a(A)
		-32 a\left( \begin{bmatrix} 2 & 2 \\ 2 & 2 \end{bmatrix} \right) \\
		  & - 16\left(
		a\left( \begin{bmatrix} 2 & 2 \\ 2 & -1 \end{bmatrix} \right)
		+a\left( \begin{bmatrix} 2 & 2 \\ -1 & 2 \end{bmatrix} \right)
		+a\left( \begin{bmatrix} 2 & -1 \\ 2 & 2 \end{bmatrix} \right)
		+a\left( \begin{bmatrix} -1 & 2 \\ 2 & 2 \end{bmatrix} \right)
		\right)                                                          \\
		  & - 8 \left(
		a\left( \begin{bmatrix} 2 & 2 \\ -1 & -1 \end{bmatrix} \right)
		+ a\left( \begin{bmatrix} 2 & -1 \\ 2 & -1 \end{bmatrix} \right)
		+ a\left( \begin{bmatrix} -1 & 2 \\ -1 & 2 \end{bmatrix} \right)
		+ a\left( \begin{bmatrix} -1 & -1 \\ 2 & 2 \end{bmatrix} \right)
		+ a\left( \begin{bmatrix} -1 & -1 \\ -1 & -1 \end{bmatrix} \right)
		\right)                                                          \\
		  & \leq 36 - 4m.
	\end{align*}
	Hence $m \leq 9$.
	Moreover, if $m=9$ then $a(A)$ is equal to $0$ for every $A \not\in \cM$.
	This is the desired condition.
\end{proof}

\subsection{An upper bound on $m$ with $(5,1)$-pillars}		\label{subsec:3}
In this subsection, we consider the case where $(5,1)$-pillars containing vertices and $(5,2)$-pillars containing vertices coexist, and prove Theorem~\ref{thm:key}.
To begin with, we consider the vertices $z_1$, $z_2$, $y_1$ and $y_2$ in Figure~\ref{fig:key}, and the corresponding vectors $\bar{z}_1$, $\bar{z}_2$, $\bar{y}_1$ and $\bar{y}_2$, which are given in Definition~\ref{dfn:pillar} with respect to $B:=\{b_1,b_2,b_3,b_4,b_5\}$ in Figure~\ref{fig:key}.
To give the Gram matrix of these vectors, we prepare some matrices as follows.
Let
\begin{align*}
	B^{(\textup{I})}_{11} := \begin{bmatrix} 4 & -2 \\ -2 & 4 \end{bmatrix}, \quad
	 &  & B^{(\textup{II})}_{11} := \begin{bmatrix} 4 & -1 \\ -1 & 4 \end{bmatrix}, \quad
	 &  & B_{22} := \begin{bmatrix} 6 & -3 \\ -3 & 6 \end{bmatrix}.
\end{align*}
Let
\begin{align*}
	 & B^{(1)}_{21} := \begin{bmatrix} 1 & 1 \\ 1 & 1 \end{bmatrix}, \quad
	 &                                                                       & B^{(2)}_{21} := \begin{bmatrix} -2 & 1 \\ 1 & 1 \end{bmatrix}, \quad
	 &                                                                       & B^{(3)}_{21} := \begin{bmatrix} -2 & -2 \\ 1 & 1 \end{bmatrix}, \quad
	 &                                                                       & B^{(4)}_{21} := \begin{bmatrix} -2 & 1 \\ -2 & 1 \end{bmatrix},        \\
	 & B^{(5)}_{21} := \begin{bmatrix} -2 & 1 \\ 1 & -2 \end{bmatrix}, \quad
	 &                                                                       & B^{(6)}_{21} := \begin{bmatrix} -2 & -2 \\ -2 & 1 \end{bmatrix}, \quad
	 &                                                                       & B^{(7)}_{21} := \begin{bmatrix} -2 & -2 \\ -2 & -2 \end{bmatrix}
\end{align*}
and
\begin{align*}
	B(X,i) :=
	\begin{bmatrix}
		B^{(X)}_{11} & B^{(i) \top}_{21} \\ B^{(i)}_{21} & B_{22}
	\end{bmatrix}
	\quad \text{ for } \ X \in \{\textup{I}, \textup{II}\} \ \text{ and }\  i \in \{1,\ldots,7\}.
\end{align*}
By Lemma~\ref{lem:inner products}, we see that the Gram matrix of $\bar{z}_1$, $\bar{z}_2$, $\bar{y}_1$ and $\bar{y}_2$ is equal to $(2/15) B(X,i)$ for some $ X \in \{\textup{I}, \textup{II}\}$ and $i \in \{1,\ldots,7\}$ by exchanging $z_1$ and $z_2$, or $y_1$ and $y_2$ if necessary.
For example, if $z_1$, $z_2$, $y_1$ and $y_2$ are vertices in Figure~\ref{fig:exQ},
then $X = \textup{I}$ and $i = 2$ as in~\eqref{eq:Q}.

In the remainder of this subsection, for each choice of adjacency relations among $z_1$, $z_2$, $y_1$, $y_2$, or equivalently, $B(X,i)$, we consider an upper bound on $m$ under the condition that the matrix $Q=Q(B(X,i),a)$ is positive semidefinite.
Here, recall that $a$ is determined by the adjacency relations between $\{ z_1, z_2, y_1, y_2\}$ and $\{ x_{1,1}, x_{1,2}, \ldots, x_{m,1}, x_{m,2} \}$.
In the following lemma, we list the cases where the adjacency relations between $\{z_1,z_2\}$ and $\{y_1,y_2\}$ are incompatible with the requirement that $Q$ be positive semidefinite.
\begin{lemma}	\label{lem:1}
	Let $a : M_{2,2,2}(\{-3,0\},\{-1,2\}) \to \Z_{\geq 0}$ be a function.
	If $(X,i) \in \{(\uI,6),(\uI,7),(\uII,7)\}$, then
	$
		Q\left(
		B(X,i) ; a
		\right)
	$
	is not positive semidefinite.
\end{lemma}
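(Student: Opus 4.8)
The plan is to begin by noting that the hypothesis on $a$ collapses the block structure entirely. Since $a$ is the zero map, the total multiplicity $m$ is $0$, so both $Q_{21}(a)$ and $Q_{22}(0)$ are empty, and $Q\left(B(X,i);a\right)$ is simply the $4\times 4$ matrix $B(X,i)$ itself. Thus the assertion reduces to showing that, for each of the three pairs $(X,i)\in\{(\uI,6),(\uI,7),(\uII,7)\}$, the matrix
$
	B(X,i)=\begin{bmatrix} B^{(X)}_{11} & B^{(i)\top}_{21} \\ B^{(i)}_{21} & B_{22}\end{bmatrix}
$
fails to be positive semidefinite.

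The next step is to exploit this block form through the Schur complement. The lower-right block $B_{22}=9I_2-3J_2$ is positive definite, with eigenvalues $9$ and $3$ and inverse $B_{22}^{-1}=\tfrac19(I_2+J_2)$. Hence Lemma~\ref{lem:Schur} applies and tells us that $B(X,i)$ is positive semidefinite if and only if the $2\times 2$ Schur complement $\Delta(X,i):=B^{(X)}_{11}-B^{(i)\top}_{21}B_{22}^{-1}B^{(i)}_{21}$ is positive semidefinite. Since $\Delta(X,i)$ is symmetric of size $2$, it suffices to show it is indefinite, which follows at once from a negative determinant.

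Then I would carry out the three short computations. For $i=7$ one has $B^{(7)}_{21}=-2J_2$, so $B^{(7)\top}_{21}B_{22}^{-1}B^{(7)}_{21}=4\,J_2B_{22}^{-1}J_2=\tfrac83 J_2$; subtracting this from $B^{(\uI)}_{11}$ and from $B^{(\uII)}_{11}$ gives $\Delta(\uI,7)=\tfrac13\begin{bmatrix}4&-14\\-14&4\end{bmatrix}$ and $\Delta(\uII,7)=\tfrac13\begin{bmatrix}4&-11\\-11&4\end{bmatrix}$, both of negative determinant. For $(\uI,6)$, with $B^{(6)}_{21}=\begin{bmatrix}-2&-2\\-2&1\end{bmatrix}$, the same routine yields $\Delta(\uI,6)=\tfrac13\begin{bmatrix}4&-8\\-8&10\end{bmatrix}$, again of negative determinant. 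In each case $\Delta(X,i)$ is not positive semidefinite, and therefore neither is $B(X,i)$.

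There is no genuine conceptual obstacle here: the entire content is the one-line reduction to the Schur complement together with three $2\times 2$ determinant checks. If one wished to avoid Lemma~\ref{lem:Schur} altogether, an equally brief alternative is to produce an explicit witness vector. For any $x\in\R^2$, the vector $\begin{bmatrix} x \\ -B_{22}^{-1}B^{(i)}_{21}x\end{bmatrix}$ realises the value $x^\top\Delta(X,i)\,x$ of the quadratic form associated with $B(X,i)$, so choosing $x$ to be an eigenvector of $\Delta(X,i)$ for its negative eigenvalue exhibits a vector on which $B(X,i)$ is strictly negative.
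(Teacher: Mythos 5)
Your proposal is correct, and its computations check out: the Schur complements $\Delta(\uI,7)=\tfrac13\bigl[\begin{smallmatrix}4&-14\\-14&4\end{smallmatrix}\bigr]$, $\Delta(\uII,7)=\tfrac13\bigl[\begin{smallmatrix}4&-11\\-11&4\end{smallmatrix}\bigr]$ and $\Delta(\uI,6)=\tfrac13\bigl[\begin{smallmatrix}4&-8\\-8&10\end{smallmatrix}\bigr]$ all have negative determinant, so none of the three matrices $B(X,i)$ is positive semidefinite. The paper dispatches this lemma with the single phrase ``direct calculation,'' so your argument is essentially the same verification, merely organized more cleanly via the Schur complement of $B_{22}$.
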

\begin{proof}
	By direct calculation, we see that $B(X,i)$ is not positive semidefinite.
	Since $B(X,i)$ is a principal submatrix of $Q(B(X,i),a)$, the matrix $Q(B(X,i),a)$ is not positive semidefinite.
\end{proof}

Next, we treat the remaining cases.
\begin{lemma}	\label{lem:2}
	Let $a : M_{2,2,2}(\{-3,0\},\{-1,2\}) \to \Z_{\geq 0}$ be a function.
	Assume $(X,i) \in \{(\uI,1),(\uI,3),(\uI,4)\} \cup \{(\uII,1),(\uII,3),(\uII,4),(\uII,6)\}.$
	The sum of values of $a$ is at most $6$ if $Q(B(X,i),a)$ is positive semidefinite.
\end{lemma}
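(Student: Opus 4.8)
The plan is to follow the template of the proof of Theorem~\ref{thm:m=9}: pass to a Schur complement and then read off the bound from a nonnegative combination of rank-one test forms. Since $B_{22}=\begin{bmatrix}6&-3\\-3&6\end{bmatrix}$ has eigenvalues $9$ and $3$, the block $Q_{22}=B_{22}^{\oplus m}$ is positive definite, so by Lemma~\ref{lem:Schur} the matrix $Q(B(X,i);a)$ is positive semidefinite if and only if the Schur complement
\begin{align*}
	\Delta(X,i) := B(X,i) - \sum_{A \in M_{2,2,2}(\{-3,0\},\{-1,2\})} a(A)\, A^\top B_{22}^{-1} A
\end{align*}
is positive semidefinite. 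Here $B_{22}^{-1}=\tfrac{1}{27}\begin{bmatrix}6&3\\3&6\end{bmatrix}$, so $w^\top B_{22}^{-1} w=\tfrac{2}{9}\left(w_1^2+w_1w_2+w_2^2\right)$ for $w=(w_1,w_2)^\top$.

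First I would record, for a test vector $v=(v',v'')^\top\in\R^4$ with $v',v''\in\R^2$, that $Av=Pv'+Rv''$ whenever $A=[\,P\mid R\,]$ with $P\in\{-3,0\}^{2\times2}$ and $R\in\{-1,2\}^{2\times2}$. Each coordinate of $Av$ depends only on the corresponding row of $P$ and of $R$, so the two coordinates vary independently and each ranges over an explicit finite set as $A$ runs over $M_{2,2,2}(\{-3,0\},\{-1,2\})$. Consequently $c(v):=\min_A (Av)^\top B_{22}^{-1}(Av)$ is computed by minimizing the convex form $\tfrac{2}{9}(w_1^2+w_1w_2+w_2^2)$ over the resulting finite set of pairs $(w_1,w_2)$.

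Then, for each of the seven pairs $(X,i)$, I would exhibit a test vector $v$ (or, where one vector does not suffice, a nonnegative combination $\sum_k\lambda_k v^{(k)}$) with $v^\top B(X,i)v\le 6\,c(v)$. Feeding such a $v$ into $v^\top\Delta(X,i)v\ge0$ and bounding every summand below by $c(v)$ gives $0\le v^\top B(X,i)v-c(v)\,m\le 6\,c(v)-c(v)\,m$, whence $m\le6$. For example $v=(-1,-1,1,1)^\top$ yields $c(v)=\tfrac23$ together with $v^\top B(\uII,1)v=4=6c(v)$, so the bound is tight for $(\uII,1)$; the same $v$ gives the stronger $v^\top B(\uI,1)v=2$, hence $m\le3$ for $(\uI,1)$. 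For the asymmetric cases $(\uI,3),(\uI,4),(\uII,3),(\uII,4),(\uII,6)$ I would let the first two entries of $v$ interact with the connecting block $B^{(i)}_{21}$ so as to keep $v^\top B(X,i)v$ small while preserving a large $c(v)$.

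The main obstacle is producing, case by case, a certificate whose constant-to-slope ratio is exactly $6$. The naive choice $v=(0,0,1,1)^\top$ ignores the seed coordinates and only yields $m\le9$ (recovering Theorem~\ref{thm:m=9}), so the test vector must genuinely couple the $(5,1)$-pillar block $B^{(X)}_{11}$ and the connecting block $B^{(i)}_{21}$ to the fixed pair governed by $B_{22}$. Once the vectors are chosen, the remaining work---evaluating each $c(v)$ by minimizing a single convex quadratic over a small explicit set of coordinate values, and checking $v^\top B(X,i)v\le6c(v)$---is routine and can be verified directly.
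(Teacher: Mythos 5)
Your proposal is essentially the paper's own proof: apply Lemma~\ref{lem:Schur} to pass to the Schur complement $\Delta$, evaluate it on a single test vector, and bound $m$ by the ratio of the constant term to the minimal coefficient $2/3$ of the $a(A)$'s; your vector $(-1,-1,1,1)^\top$ is, up to sign, exactly the paper's choice $(1,1,-1,-1)^\top$ for the cases $i=1$, with the same values $c(v)=2/3$ and constant terms $2$ resp.\ $4$. The only step you leave unspecified --- the certificates for the remaining cases --- is settled in the paper by the vectors $(1,1,1,0)^\top$ for $i=3$, $(1,0,1,1)^\top$ for $i=4$ and $(1,1,1,1)^\top$ for $(\uII,6)$, each of which again gives minimal coefficient $2/3$ and constant term at most $4$, hence $m\le 4/(2/3)=6$.
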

\begin{proof}
	By Lemma~\ref{lem:Schur}, we see that
	\begin{align}\label{lem:1:1}
		\Delta :=
		B(X,i)
		- \sum_{A \in M_{2,2,2}(\{-3,0\},\{-1,2\})}
		a(A) \cdot
		A^\top
		  (9I_2-3J_2)^{-1}
		A
	\end{align}
	is positive semidefinite.
	We let
	\begin{align*}
		\bx := \begin{cases}
			       \begin{bmatrix} 1 & 1 & -1 & -1\end{bmatrix}^\top & \text{ if } (X,i) \in \{(\uI,1),(\uII,1)\}, \\
			       \begin{bmatrix} 1 & 1 & 1 & 0 \end{bmatrix}^\top  & \text{ if } (X,i) \in \{(\uI,3),(\uII,3)\}, \\
			       \begin{bmatrix} 1 & 0 & 1 & 1 \end{bmatrix}^\top  & \text{ if } (X,i) \in \{(\uI,4),(\uII,4)\}, \\
			       \begin{bmatrix} 1 & 1 & 1 & 1 \end{bmatrix}^\top  & \text{ if } (X,i) = (\uII,6).
		       \end{cases}
	\end{align*}
	We regard $\bx^\top \Delta \bx$ as a linear polynomial with variables $a(A)$'s.
	The constant term satisfies
	\begin{align*}
		\bx^\top
		B(X,i)
		\bx
		= \begin{cases}
			  2 & \text{ if } (X,i) \in \{(\uI,1),(\uI,3),(\uI,4)\} \cup \{(\uII,4),(\uII,6)\}, \\
			  4 & \text{ if } (X,i) \in \{(\uII,1),(\uII,3)\}.
		  \end{cases}
	\end{align*}
	Also, the coefficients satisfy
	\begin{align*}
		\min \left\{
		\left( A \bx \right)^\top
		                     (9I_2-3J_2)^{-1}
		A
		\bx :
		A \in M_{2,2,2}(\{-3,0\},\{-1,2\})
		\right\} = \frac{2}{3}.
	\end{align*}
	Since $\bx^\top \Delta \bx \geq 0$,
	the sum of values of $a$ is at most $4/(2/3) = 6$.
\end{proof}

\begin{lemma}	\label{lem:3}
	Let $a : M_{2,2,2}(\{-3,0\},\{-1,2\}) \to \Z_{\geq 0}$ be a function.
	Assume $X \in \{\uI,\uII\}$.
	The sum of values of $a$ is at most $8$ if $Q(B(X,5),a)$ is positive semidefinite.
\end{lemma}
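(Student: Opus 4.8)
The plan is to run the argument of Lemma~\ref{lem:2} to get the easy bound, extract the sharp consequence of the extremal case by a trace identity, and then eliminate that case using the integrality of $a$. First I would apply Lemma~\ref{lem:Schur} to replace positive semidefiniteness of $Q(B(X,5);a)$ by that of
\[
	\Delta := B(X,5) - \sum_{A} a(A)\, A^\top B_{22}^{-1} A ,
\]
writing each $A=[\,P\mid R\,]$ with $P\in M_2(\{-3,0\})$, $R\in M_2(\{-1,2\})$, so that $A^\top B_{22}^{-1}A$ splits into the blocks $P^\top B_{22}^{-1}P$, $P^\top B_{22}^{-1}R$, $R^\top B_{22}^{-1}R$; here $B_{22}^{-1}=\tfrac19\bigl(\begin{smallmatrix}2&1\\1&2\end{smallmatrix}\bigr)$ and the row sums $R_1,R_2$ of $R$ lie in $\{-2,1,4\}$.

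Testing against $\bx=(0,0,1,1)^\top$ gives $0\le \bx^\top\Delta\bx = 6-\tfrac29\sum_A a(A)(R_1^2+R_1R_2+R_2^2)$, and since $R_1^2+R_1R_2+R_2^2\ge 3$ always, the sum of images is at most $9$. This is only the easy bound, and it cannot be improved by any quadratic form: the matrix $W=\tfrac32\,(0,0,1,1)^\top(0,0,1,1)$ is a dual‑optimal certificate, so the semidefinite relaxation has value exactly $9$. Hence the improvement to $8$ must genuinely use $a(A)\in\Z_{\ge0}$, and the heart of the proof is to forbid the value $9$.

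Assume the sum equals $9$. Then equality above forces every $A$ with $a(A)>0$ to have $(R_1,R_2)\in\{(1,1),(1,-2),(-2,1)\}$ and forces $\Delta(0,0,1,1)^\top=0$, which pins the $(y,y')$‑block of $\Delta$ to the form $\Delta_{34}\bigl(\begin{smallmatrix}-1&1\\1&-1\end{smallmatrix}\bigr)$ with $\Delta_{34}\le 0$. Next I would compute $\sum_A a(A)\,\tr\!\bigl(B_{22}^{-1}R^\top B_{22}^{-1}R\bigr)$ in two ways: evaluated against the forced block it equals $2+\tfrac29\Delta_{34}$, while summed edge‑by‑edge it equals $\tfrac29 k+\tfrac49(9-k)$, where $k$ is the number of \emph{efficient} edges, those with $R^\top B_{22}^{-1}R=E:=\bigl(\begin{smallmatrix}2/3&-1/3\\-1/3&2/3\end{smallmatrix}\bigr)$ (the remaining $9-k$ have $R$ with two equal rows and contribute $\tfrac49$ each). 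Equating gives $k+\Delta_{34}=9$, whence $k=9$ and $\Delta_{34}=0$: all nine edges are efficient and the $(y,y')$‑block of $\Delta$ vanishes.

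A vanishing $(y,y')$‑block then forces the whole $(y,y')$ rows and columns of $\Delta$ to be zero, i.e.\ $\sum_A a(A)\,P^\top B_{22}^{-1}R=B^{(5)\top}_{21}=\bigl(\begin{smallmatrix}-2&1\\1&-2\end{smallmatrix}\bigr)$, while simultaneously $\Delta_{TL}=B^{(X)}_{11}-\sum_A a(A)\,P^\top B_{22}^{-1}P\succeq 0$. The hard part will be to show this system has no integer solution among nine efficient edges. I would finish by a finite check: enumerating the six efficient choices of $R$ and the values $\mathbf p^\top B_{22}^{-1}\mathbf c\in\{-1,0,1,2\}$ contributed by the columns $\mathbf p$ of $P$ and $\mathbf c$ of $R$, one verifies that meeting the off‑diagonal $+1$ together with the diagonal $-2$ of $B^{(5)\top}_{21}$ over $\Z_{\ge0}$ is incompatible with $\Delta_{TL}\succeq0$ (the required column energy in $\sum_A a(A)\,P^\top B_{22}^{-1}P$ collides with the strictly negative off‑diagonal of $B^{(X)}_{11}$). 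Since the real relaxation is feasible at $9$, this last elimination cannot be packaged as a semidefinite inequality and is exactly where integrality is essential; discharging the finitely many configurations—by hand or by a short computation as in Theorem~\ref{thm:m=9}—yields the bound $8$.
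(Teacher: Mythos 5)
Your proof is correct in substance, but it takes a genuinely different route from the paper's. The paper first invokes Theorem~\ref{thm:m=9} to get $m\le 9$ together with the support restriction $a(A)=0$ for $A_2\notin\cM$ when $m=9$, and then kills the case $m=9$ with one aggregated inequality $\sum_{i=1}^{3}\bx_i^\top\Delta\bx_i\ge 0$ for three explicit test vectors, which gives $m\le\lfloor 40/(14/3)\rfloor=8$ directly. You instead analyze the equality case structurally: the kernel vector $(0,0,1,1)^\top$ pins down the bottom block of $\Delta$, your trace identity $k+\Delta_{34}=9$ forces all nine blocks $R$ to be efficient (recovering the paper's support restriction by a different mechanism) and forces the bottom block to vanish, and you are left with the linear system $\sum_A a(A)\,P^\top B_{22}^{-1}R=B^{(5)\top}_{21}$ together with $\Delta_{TL}\succeq 0$. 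I verified that your final elimination succeeds, and for exactly the reason you indicate: each diagonal entry $-2$ of $B^{(5)}_{21}$ requires two units of mass contributing $\mathbf{p}^\top B_{22}^{-1}\mathbf{r}=-1$ in that column, which exhausts the diagonal budget $4$ of $B^{(X)}_{11}$, so $\Delta_{TL}$ has zero diagonal and must vanish; but $\mathbf{p}_1^\top B_{22}^{-1}\mathbf{p}_2\ge 0$ for all admissible columns of $P$, while the off-diagonal entry of $B^{(X)}_{11}$ is $-2$ or $-1$. You should write this check out explicitly rather than assert it, since it is the crux of your argument. Two minor inaccuracies do not affect validity: your claim that the real relaxation has value exactly $9$ is unjustified (the dual certificate only gives the upper bound, and in fact the elimination above works for real nonnegative $a$ as well, so the relaxation value is strictly below $9$); correspondingly, what you actually use is integrality of the total $m$, not of the individual $a(A)$. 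Your route is longer but explains why $m=9$ fails; the paper's is a short numerical verification once its three test vectors are produced.
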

\begin{proof}
	Let $m$ be the sum of values of $a$.
	By Theorem~\ref{thm:m=9}, $m \leq 9$ holds.
	By way of contradiction, we assume $m=9$.
	By applying Theorem~\ref{thm:m=9} again,
	$$
		a\left( \begin{bmatrix} A_1 & A_2 \end{bmatrix} \right) = 0
	$$
	holds for every $A_1 \in M_{2}(\{-3,0\})$ and $A_2 \not\in \cM$.
	We define a positive semidefinite matrix $\Delta$ as~\eqref{lem:1:1}.
	Let
	\begin{align*}
		\bx_1 := \begin{bmatrix} 1 & 0 & 1 & -1 \end{bmatrix}^\top, \qquad
		\bx_2 := \begin{bmatrix} 1 & 1 & 1 & 1 \end{bmatrix}^\top, \qquad
		\bx_3 := \begin{bmatrix} 0 & 1 & -1 & 1 \end{bmatrix}^\top.
	\end{align*}
	We regard $ \sum_{i=1}^3 \bx_i^\top \Delta \bx_i$ as a linear polynomial with variables $a(A)$'s.
	The constant term satisfies
	\begin{align*}
		\sum_{i=1}^3 \bx_i^\top
		B(X,5)
		\bx_i
		= \begin{cases}
			  38 & \text{ if } X=\uI  \\
			  40 & \text{ if } X=\uII
		  \end{cases}
	\end{align*}
	Also, the coefficients satisfy
	\begin{align*}
		\min \left\{ \sum_{i=1}^3
		\left( \begin{bmatrix} A_1 & A_2 \end{bmatrix} \bx_i \right)^\top
		                                                             (9I_2-3J_2)^{-1}
		\begin{bmatrix} A_1 & A_2 \end{bmatrix}
		\bx_i :
		A_1 \in M_{2}(\{-3,0\}), A_2 \in \cM
		\right\} = \frac{14}{3}.
	\end{align*}
	Hence $m \leq \lfloor 40/(14/3) \rfloor = 8$.
	This is a contradiction.
	We have $m \leq 8$.
\end{proof}

Let $\cM'_\uI$ be the set of
\begin{align}\label{thm:key:3}
	\begin{bmatrix} 0 & 0 & -1 & -1 \\ 0 & 0 & -1 & 2 \end{bmatrix},
	\begin{bmatrix} 0 & 0 & -1 & -1 \\ 0 & 0 & 2 & -1 \end{bmatrix},
	\begin{bmatrix} 0 & 0 & -1 & 2 \\ 0 & 0 & 2 & -1 \end{bmatrix},
	\begin{bmatrix} -3& 0 & 2 & -1 \\ 0 & 0 & -1 & -1 \end{bmatrix},
	\begin{bmatrix} 0 & -3 & -1 & -1 \\ 0 & 0 & 2 & -1 \end{bmatrix}.
\end{align}
Let $\cM'_\uII$ be the union of $\cM'_\uI$ and the set of
\begin{align}\label{thm:key:4}
	\begin{bmatrix} -3 & 0 & 2 & -1 \\ 0 & 0 & -1 & 2 \end{bmatrix},
	\begin{bmatrix} 0& -3 & -1 & -1 \\ 0 & 0 & -1 & 2 \end{bmatrix},
	\begin{bmatrix} -3& 0 & 2 & -1 \\ 0 &-3 &-1 & -1 \end{bmatrix}.
\end{align}
For $X \in \{\uI,\uII\}$,
we let $$\cM_{X} := \cM'_{X} \cup \{ \bar{A} : A \in \cM'_{X} \}.$$
\begin{lemma}	\label{lem:4}
	Let $a : M_{2,2,2}(\{-3,0\},\{-1,2\}) \to \Z_{\geq 0}$ be a function.
	Let $X \in \{\uI,\uII\}$.
	If $Q(B(X,2),a)$ is positive semidefinite and the sum of values of $a$ equals $9$,
	then $a(A) = 0$ holds for every $A \not\in \cM_X$.
\end{lemma}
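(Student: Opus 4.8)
The plan is to run the Schur-complement analysis of Lemmas~\ref{lem:2} and~\ref{lem:3}, but to extract the \emph{equality} information forced by the hypothesis that the image-sum is exactly $9$. First I would apply Lemma~\ref{lem:Schur} to reduce positive semidefiniteness of $Q\left(B(X,2);a\right)$ to that of the $4\times 4$ matrix
$$\Delta := B(X,2) - \sum_{A} a(A)\,A^\top(9I_2-3J_2)^{-1}A,$$
the sum running over $A\in M_{2,2,2}(\{-3,0\},\{-1,2\})$. Writing each $A=[\,A_1\ A_2\,]$ with $A_1\in M_2(\{-3,0\})$ and $A_2\in M_2(\{-1,2\})$, setting $K:=(9I_2-3J_2)^{-1}$, and splitting $\Delta$ into $2\times2$ blocks $\Delta_{11},\Delta_{12},\Delta_{22}$ along the $B^{(X)}_{11}$- and $B_{22}$-coordinates, I get $\Delta_{22}=(9I_2-3J_2)-\sum_A a(A)\,A_2^\top K A_2$, $\Delta_{12}=(B^{(2)}_{21})^\top-\sum_A a(A)\,A_1^\top K A_2$, and $\Delta_{11}=B^{(X)}_{11}-\sum_A a(A)\,A_1^\top K A_1$.

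The crucial point is that $\Delta_{22}$ is precisely the matrix $\Delta$ from Theorem~\ref{thm:m=9} for the marginal function $a'(A_2):=\sum_{A_1}a([\,A_1\ A_2\,])$, whose image-sum is still $9$. Being a principal submatrix of $\Delta$, it is positive semidefinite, so the inequality in the proof of Theorem~\ref{thm:m=9} applies: with $\bv:=(1,-1)^\top$,
$$0\le 3\,\bone^\top\Delta_{22}\bone+\bv^\top\Delta_{22}\bv = 36-4\cdot 9-(\text{a nonnegative combination of the }a(A)).$$
Since the right-hand side is $\le 0$, every coefficient in that combination must vanish, which gives $a([\,A_1\ A_2\,])=0$ whenever $A_2\notin\cM$, and the test form itself must vanish; as $\bone$ and $\bv$ span $\R^2$ and $\Delta_{22}\succeq0$, this forces $\Delta_{22}=0$. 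A positive semidefinite matrix with a vanishing diagonal block has the adjacent off-diagonal block vanish as well, so I then read off $\Delta_{12}=0$ and $\Delta_{11}\succeq0$; explicitly,
$$\sum_A a(A)\,A_1^\top K A_2=(B^{(2)}_{21})^\top,\qquad B^{(X)}_{11}-\sum_A a(A)\,A_1^\top K A_1\succeq0.$$

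The final step is to turn these two relations into the claim that $a$ vanishes off $\cM_X$. Only matrices with $A_1\neq0$ contribute to either relation, and $A_1$ ranges over the finite set $M_2(\{-3,0\})$ while, by the previous step, $A_2\in\cM$; evaluating the $2\times2$ matrices $A_1^\top K A_2$ and $A_1^\top K A_1$ over these finitely many pairs converts $\Delta_{12}=0$ into a linear system in the $a(A)$ and $\Delta_{11}\succeq0$ into a capacity constraint, which together with $\sum_A a(A)=9$ and $a\ge0$ should leave room only for the configurations recorded in $\cM_X$. This is the main obstacle: a finite but delicate feasibility check. It is also exactly where the two cases part company, since the different off-diagonal entries of $B^{(\uI)}_{11}$ and $B^{(\uII)}_{11}$ change how much weight $\Delta_{11}\succeq0$ permits on nonzero left blocks, and this is what admits the three additional matrices of~\eqref{thm:key:4} in $\cM_\uII$. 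I would organize it as a short case analysis over the possible nonzero left blocks (a single $-3$ in one entry, or $\mathrm{diag}(-3,-3)$ in case $\uII$), checking that any other left block would either break $\Delta_{12}=0$ or overload $\Delta_{11}$.
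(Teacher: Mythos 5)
Your setup is correct, and the first half is in fact sharper than the paper's: the paper only extracts from Theorem~\ref{thm:m=9} that $a([A_1\;A_2])=0$ whenever $A_2\notin\cM$, whereas you additionally observe that the equality case $m=9$ forces $\Delta_{22}=0$ and hence $\Delta_{12}=0$, converting the hypothesis into the linear system $\sum_A a(A)\,A_1^\top K A_2=(B^{(2)}_{21})^\top$ together with $\Delta_{11}\succeq 0$. Since, given $m=9$ and the restriction $A_2\in\cM$, these conditions are equivalent to positive semidefiniteness of $\Delta$, nothing is lost and the conclusion would follow from a correct feasibility analysis.

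The genuine gap is that this feasibility analysis --- which you yourself flag as ``the main obstacle'' --- is never carried out, and it is exactly where all the content of the lemma lives: it is the only place where the specific lists \eqref{thm:key:3} and \eqref{thm:key:4}, and the distinction between $X=\uI$ and $X=\uII$, can emerge. Moreover, your check is not a per-matrix exclusion: to prove $a(A^*)=0$ for a fixed $A^*\notin\cM_X$ you must rule out every distribution of the nine units over the $15\times 6$ pairs $(A_1,A_2)$ with $A_1\neq 0$ and $A_2\in\cM$ (plus the unconstrained mass on $A_1=0$) subject to four scalar linear equations and a $2\times 2$ positive semidefiniteness constraint, so the values $a(A)$ interact and each exclusion needs a Farkas-type certificate or an enumeration that you have not exhibited. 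The paper sidesteps this interaction entirely: it evaluates $\bx_i^\top\Delta\bx_i$ for three explicit vectors $\bx_1,\bx_2,\bx_3\in\R^4$, notes that the constant term $6$ equals $9$ times the minimal coefficient $2/3$, and concludes that $a(A)>0$ is impossible whenever some $\bx_i$ assigns $A$ a coefficient strictly greater than $2/3$ --- a condition verified matrix by matrix, independently of the other values of $a$. To complete your argument you would have to either supply the case analysis over the nonzero left blocks in full (in particular explaining why certain left blocks are admissible only when $X=\uII$), or fall back on test vectors applied to the whole of $\Delta$ as the paper does.
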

\begin{proof}
	Let $m$ be the sum of values of $a$.
	Assume $m=9$.
	We define a positive semidefinite matrix $\Delta$ as~\eqref{lem:1:1}.
	By applying Theorem~\ref{thm:m=9},
	we have
	$
		a(\begin{bmatrix} A_1 & A_2 \end{bmatrix}) = 0
	$
	for any $A_1 \in M_2(\{-3,0\})$ and $A_2 \not\in \cM$.
	We let
	\begin{align*}
		\bx_1 := \begin{bmatrix} 0 &-1 & 1 & 1 \end{bmatrix}^\top, \qquad
		\bx_2 := \begin{bmatrix} 1 & 0 & 1 & 0 \end{bmatrix}^\top, \qquad
		\bx_3 := \begin{bmatrix} 1 & 1 & 0 &-1 \end{bmatrix}^\top.
	\end{align*}
	We regard $\bx_i^\top \Delta \bx_i$ as a linear polynomial with variables $a(A)$'s.
	The constant term satisfies
	$
		\bx_i^\top
		B(X,2)
		\bx_i
		= 6
	$
	for each $(X,i) \in \{(\uI,1),(\uI,2),(\uI,3)\} \cup \{(\uII,1),(\uII,2)\}$.
	Also, the coefficients satisfy
	\begin{align*}
		\min \left\{
		\left( \begin{bmatrix} A_1 & A_2 \end{bmatrix} \bx_i \right)^\top
		                                                             (9I_2-3J_2)^{-1}
		\begin{bmatrix} A_1 & A_2 \end{bmatrix}
		\bx_i :
		A_1 \in M_{2}(\{-3,0\}),  A_2 \in \cM
		\right\} = \frac{2}{3}
	\end{align*}
	for $i \in \{1,2,3\}$.
	Hence for $(X,i) \in \{(\uI,1),(\uI,2),(\uI,3)\} \cup \{(\uII,1),(\uII,2)\}$,
	\begin{align*}
		0 & \leq \bx^\top_i \Delta \bx_i \\
		  & =
		\bx_i^\top
		B(X,2)
		\bx_i
		- \sum_{A \in M_{2,2,2}(\{-3,0\},\{-1,2\})}
		a\left( A \right) \cdot
		\left( A \bx_i \right)^\top
		                       (9I_2-3J_2)^{-1}
		A \bx_i                          \\
		  & =
		\bx_i^\top
		B(X,2)
		\bx_i
		- \sum_{A_1 \in M_2(\{-3,0\}), A_2 \in \cM}
		a\left( \begin{bmatrix} A_1 & A_2 \end{bmatrix} \right) \cdot
		\left( \begin{bmatrix} A_1 & A_2 \end{bmatrix} \bx_i \right)^\top
		                                                             (9I_2-3J_2)^{-1}
		\begin{bmatrix} A_1 & A_2 \end{bmatrix}
		\bx_i                            \\
		  & =
		- \sum_{A_1 \in M_2(\{-3,0\}), A_2 \in \cM}
		a\left( \begin{bmatrix} A_1 & A_2 \end{bmatrix} \right) \cdot
		\left(
		\left( \begin{bmatrix} A_1 & A_2 \end{bmatrix} \bx_i \right)^\top
		                                                             (9I_2-3J_2)^{-1}
		\begin{bmatrix} A_1 & A_2 \end{bmatrix}
		\bx_i
		-\frac{2}{3}
		\right).
	\end{align*}
	We may verify by direct calculation that
	$$
		\left( \begin{bmatrix} A_1 & A_2 \end{bmatrix} \bx_i \right)^\top
		(9I_2-3J_2)^{-1}
		\begin{bmatrix} A_1 & A_2 \end{bmatrix} \bx_i
		> \frac{2}{3},
	$$
	for some $i$ if $\begin{bmatrix} A_1 & A_2 \end{bmatrix} \not\in \cM_X$.
	Hence, $a(A) = 0$ holds for every $A \not\in \cM_X$.
\end{proof}


\begin{proof}[Proof of Theorem~\ref{thm:key}]
	Let $x_{i,1}$ and $x_{i,2}$ be adjacent vertices of $P_{B,\{b_1,b_2\}}$ for $i \in \{1,\ldots,m\}$.
	Let $y_1$ and $y_2$ be adjacent vertices of $P_{B,\{b_3,b_4\}}$.
	In the case of $(\uI)$, we set $X := \uI$, and let $z_1$ and $z_2$ be vertices of $P_{B,\{b_1\}}$.

	In the case of $(\uII)$, we set $X := \uII$.
	Let $z_1$ be a vertex of $P_{B,\{b_1\}}$, and $z_2$ one in $P_{B,\{b_2\}}$.
	Then $z_1$ and $z_2$ are not adjacent.
	Indeed, if $z_1$ and $z_2$ are adjacent, then we have by Lemma~\ref{lem:inner products}
	$
		(\bar{z}_1,\bar{z}_1) = (\bar{z}_2,\bar{z}_2) = 8/15,
	$
	and
	$
		(\bar{z}_1,\bar{z}_2) = -8/15
	$.
	These imply $\bar{z}_1 = - \bar{z}_2$.
	Also Lemma~\ref{lem:inner products} asserts $(\bar{z}_1,\bar{y}_1), (\bar{z}_2,\bar{y}_1) \in \{2/15,-4/15\}$.
	Hence we have a contradiction, and see that $z_1$ and $z_2$ are not adjacent.

	Next we describe the Gram matrix $G$ of $\bar{z}_1,\bar{z}_2,\bar{y}_1,\bar{y}_2,\bar{x}_{1,1},\bar{x}_{1,2},\ldots,\bar{x}_{m,1},\bar{x}_{m,2}$.
	By Lemma~\ref{lem:inner products}, the Gram matrix of $\bar{z}_1$, $\bar{z}_2$, $\bar{y}_1$ and $\bar{y}_2$ multiplied by $15/2$
	is
	\[
		Q_{11} := \begin{bmatrix} B^{(X)}_{11} & B_{21}^\top \\ B_{21} & B_{22} \end{bmatrix}
	\]
	for some $B_{21} \in M_2(\{1,-2\})$.
	By Lemma~\ref{lem:inner products} again, the Gram matrix of $\bar{x}_{1,1},\bar{x}_{1,2},\ldots,\bar{x}_{m,1},\bar{x}_{m,2}$ multiplied by $15/2$ is
	$
		(9I_{2} - 3J_{2})^{\oplus m}.
	$
	By Lemma~\ref{lem:inner products}, for each $i \in \{1,\ldots,m\}$, let
	$$A_i := \frac{15}{2}\begin{bmatrix}
			(\bar{x}_{i,1},\bar{z}_1) & (\bar{x}_{i,1},\bar{z}_2) & (\bar{x}_{i,1},\bar{y}_1) & (\bar{x}_{i,1},\bar{y}_2) \\
			(\bar{x}_{i,2},\bar{z}_1) & (\bar{x}_{i,2},\bar{z}_2) & (\bar{x}_{i,2},\bar{y}_1) & (\bar{x}_{i,2},\bar{y}_2)
		\end{bmatrix} \in M_{2,2,2}(\{-3,0\},\{-1,2\}).
	$$
	We define a function $a : M_{2,2,2}(\{-3,0\},\{-1,2\}) \to \Z_{\geq 0}$ by
	$$
		a(A) := \left| \{ j \in \{1,\ldots,m\} : A_j = A \} \right|
	$$
	for each $A \in M_{2,2,2}(\{-3,0\},\{-1,2\})$.
	Then, we find that
	\[
		Q\left(
		Q_{11};
		a
		\right)
		= \frac{15}{2} G.
	\]

	Let $m$ be the sum of values of $a$.
	By exchanging $y_1$ and $y_2$, or $z_1$ and $z_2$ if necessary,
	we may assume $B_{21} = B_{21}^{(i)}$ for some $i \in \{1,\ldots,7\}$.
	Accordingly, we may write $Q_{11} = B(X,i)$, and hence $Q(Q_{11};a) = Q(B(X,i);a)$.
	By Lemmas~\ref{lem:1}, \ref{lem:2} and~\ref{lem:3}, we have $m \leq 8$ if $i \neq 2$.
	Hence we consider the case of $i=2$.

	We write $C_1,\ldots,C_5$ for the matrices in~\eqref{thm:key:3} in order from left to right.
	In addition, we write $C_6, C_7$ and $C_8$ for the three matrices in~\eqref{thm:key:4} in order from left to right.
	Note that
	$$
		\cM'_{\uI} = \{C_1, C_2, C_3, C_4,C_5\}
		\quad\text{ and }\quad
		\cM'_{\uII} = \cM'_{\uI} \cup \{C_6,C_7,C_8\}.
	$$
	We have
	\begin{align*}
		C_1^\top (9I_2-3J_2)^{-1} C_1 = C_2^\top (9I_2-3J_2)^{-1} C_2 = C_3^\top (9I_2-3J_2)^{-1} C_3
		= \frac{1}{9} \cdot \begin{bmatrix} 0 & 0 & 0 & 0 \\ 0 & 0 & 0 & 0  \\ 0 & 0 & 6 & -3 \\ 0 & 0 & -3 & 6 \end{bmatrix}
	\end{align*}
	and
	\begin{align*}
		 & C_4^\top (9I_2-3J_2)^{-1} C_4
		= \frac{1}{9} \cdot \begin{bmatrix} 18 & 0 & -9 & 9 \\ 0 & 0 & 0 & 0  \\ -9 & 0 & 6 & -3 \\ 9 & 0 & -3 & 6 \end{bmatrix},
		 &                               & C_5^\top (9I_2-3J_2)^{-1} C_5
		= \frac{1}{9} \cdot \begin{bmatrix} 0 & 0 & 0 & 0 \\ 0 & 18 & 0 & 9  \\ 0 & 0 & 6 & -3 \\ 0 & 9 & -3 & 6 \end{bmatrix}, \\
		 & C_6^\top (9I_2-3J_2)^{-1} C_6
		= \frac{1}{9} \cdot \begin{bmatrix} 18 & 0 & -9 & 0 \\ 0 & 0 & 0 & 0  \\ -9 & 0 & 6 & -3 \\ 0 & 0 & -3 & 6 \end{bmatrix},
		 &                               & C_7^\top (9I_2-3J_2)^{-1} C_7
		= \frac{1}{9} \cdot \begin{bmatrix} 0 & 0 & 0 & 0 \\ 0 & 18 & 9 & 0  \\ 0 & 9 & 6 & -3 \\ 0 & 0 & -3 & 6 \end{bmatrix}, \\
		 & C_8^\top (9I_2-3J_2)^{-1} C_8
		= \frac{1}{9} \cdot \begin{bmatrix} 18 & 9 & -9 & 9 \\ 9 & 18 & 0 & 9  \\ -9 & 0 & 6 & -3 \\ 9 & 9 & -3 & 6 \end{bmatrix}.
	\end{align*}

	Theorem~\ref{thm:m=9} asserts $m \leq 9$.
	In order to prove $m \leq 8$ by way of contradiction, we assume $m=9$.
	By Lemma~\ref{lem:4}, we have
	$a(A) = 0$ for $A \not\in \cM_X$.
	Also, we may assume that $a(A) = 0$ for each $A \in \cM_X \setminus \cM_X'$
	by exchanging $x_{i,1}$ and $x_{i,2}$ ($i \in \{1,\ldots, m \}$) if necessary.
	Let $\Delta$ be the matrix defined as~\eqref{lem:1:1}.
	By Lemma~\ref{lem:Schur}, the matrix $\Delta$ is positive semidefinite.
	Below we consider the values of $a(A)$ with $A \in \cM'_{X}$ for $\Delta$ to be positive semidefinite.
	We have
	\begin{align*}
		\Delta
		 & = B(X,2)
		- \sum_{A \in M_{2,2,2}(\{-3,0\},\{-1,2\})}
		a(A) \cdot 	A^\top (9I_2-3J_2)^{-1} A \\
		 & = B(X,2)
		- \sum_{A \in \cM'_X}
		a(A) \cdot 	A^\top (9I_2-3J_2)^{-1} A \\
		 & = B(X,2)
		- \sum_{i=1}^8
		a(C_i) \cdot  C_i^\top (9I_2-3J_2)^{-1} C_i.
	\end{align*}
	Noting that $a(C_1)+\cdots+a(C_8) = m =9$,
	we obtain
	\begin{align*}
		\Delta
		 & =
		\left[
			\begin{array}{c|c}
				B_{11}^{(X)}                                & \begin{matrix} -2 & 1 \\ 1 & 1 \end{matrix} \\
				\hline
				\begin{matrix} -2 & 1 \\ 1 & 1 \end{matrix} & \begin{matrix} 0 & 0 \\ 0 & 0 \end{matrix}
			\end{array}
			\right]
		- a(C_4) \cdot \begin{bmatrix} 2 & 0 & -1 & 1 \\ 0 & 0 & 0 & 0  \\ -1 & 0 & 0 & 0 \\ 1 & 0 & 0 & 0 \end{bmatrix}
		- a(C_5) \cdot \begin{bmatrix} 0 & 0 & 0 & 0 \\ 0 & 2 & 0 & 1  \\ 0 & 0 & 0 & 0 \\ 0 & 1 & 0 & 0 \end{bmatrix} \\
		 & \quad
		- a(C_6) \cdot \begin{bmatrix} 2 & 0 & -1 & 0 \\ 0 & 0 & 0 & 0  \\ -1 & 0 & 0 & 0 \\ 0 & 0 & 0 & 0 \end{bmatrix}
		- a(C_7) \cdot \begin{bmatrix} 0 & 0 & 0 & 0 \\ 0 & 2 & 1 & 0  \\ 0 & 1 & 0 & 0 \\ 0 & 0 & 0 & 0 \end{bmatrix}
		- a(C_8) \cdot \begin{bmatrix} 2 & 1 & -1 & 1 \\ 1 & 2 & 0 & 1  \\ -1 & 0 & 0 & 0 \\ 1 & 1 & 0 & 0 \end{bmatrix}.
	\end{align*}
	Here note that $a(C_6)=a(C_7)=a(C_8)=0$ if $X=\uI$.
	However, if $X=\uI$, then the principal submatrix of $\Delta$ indexed by $\{2,3\}$ has negative determinant.
	This is a contradiction, and $m \leq 8$ holds.

	Next we consider the case of $X=\uII$.
	Then the submatrix of $\Delta$ indexed by $\{1,2\} \times \{3,4\}$ is zero.
	Indeed, for $i \in \{1,2\}$ and $j \in \{3,4\}$, the principal submatrix of $\Delta$ indexed by $\{i,j\}$ is positive semidefinite.
	As $\Delta_{jj} = 0$, we have $-\Delta_{ij}^2 = \Delta_{ii} \Delta_{jj} - \Delta_{ij}^2 \geq 0$.
	Hence, $\Delta_{ij} = 0$ as desired.
	Thus, we have $(a(C_4),a(C_5),a(C_6),a(C_7),a(C_8)) \in \{(1,1,1,1,0),(0,0,1,1,1)\}$.
	Then the principal submatrix indexed by $\{1,2\}$ is
	$
		I_2-J_2
	$
	or
	$
		2I_2-2J_2.
	$
	Since these two matrices are not positive semidefinite, we obtain a contradiction.
	Therefore $m \leq 8$.
\end{proof}

\section{$(5,2)$-pillars and $(5,1)$-pillars} \label{sec:(5,2)-pillar}
In this section, we prove Corollary~\ref{cor:ga} by combining Theorem~\ref{thm:key} with the following lemma.
The lemma is obtained by slightly improving~\cite[Proof of Theorem~5.6]{lemmens1973}.
\begin{lemma} \label{lem:dynkin}
	Let $G$ be a connected graph with largest eigenvalue at most $2$.
	Assume that $G$ is isomorphic to neither $\tilde{A}_{t}$ $(t+1 \not\equiv 0 \pmod 3)$ nor $\tilde{D}_{t}$ $(t+1 \not\equiv 2 \pmod 3)$.
	Then there exist non-negative integers $n$ and $m$ such that $G$ contains an induced subgraph isomorphic to $nK_1 + mK_2$ and
	the order of $G$ is at most
	\begin{align*}
		\frac{4n}{3} + 3m.
	\end{align*}
\end{lemma}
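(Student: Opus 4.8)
The statement to prove (Lemma~\ref{lem:dynkin}) asks, for each connected graph $G$ with largest eigenvalue at most $2$ — excluding the two exceptional families $\tilde{A}_t$ and $\tilde{D}_t$ with the bad congruence conditions — to exhibit an induced subgraph isomorphic to $nK_1+mK_2$ together with the order bound $|V(G)| \leq \frac{4n}{3} + 3m$. The plan is to exploit the full classification in Figure~\ref{fig:CD}: every such $G$ is one of the Dynkin-type graphs $A_t, D_t, E_6, E_7, E_8$ or one of the extended types $\tilde{A}_t, \tilde{D}_t, \tilde{E}_6, \tilde{E}_7, \tilde{E}_8$. Since there are finitely many exceptional types (the $E$ and $\tilde{E}$ cases) and only two infinite families on each side, I would handle each type by selecting an explicit induced $nK_1+mK_2$ and checking the inequality directly. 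The coloring of the vertices already drawn in Figure~\ref{fig:CD} is the key hint: the white vertices (\texttt{wslim}) should be the ones deleted, so that the black vertices split into isolated vertices and edges, i.e.\ an induced $nK_1 + mK_2$.

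\textbf{The main computation.} The heart of the argument is a counting bound. For a path-like or tree-like graph on $t$ vertices, deleting a well-chosen independent set of vertices (spaced roughly every third vertex) leaves a disjoint union of isolated vertices and single edges, and one wants to maximize the "efficiency" so that the retained structure $nK_1+mK_2$ certifies the order bound. I would first verify the bound for the infinite families $A_t$ and $D_t$: for $A_t$ a path on $t$ vertices, the pattern black-black-white repeated (as drawn) gives an induced $mK_2$ on the black vertices with one residual piece depending on $t \bmod 3$, and the inequality $t \leq \frac{4n}{3}+3m$ follows by checking the three residues. The family $D_t$ is the same path with one extra pendant vertex at the second position, handled by the same residue analysis with a small correction. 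Crucially, the congruence restrictions in the hypothesis are exactly what rule out the two extended cases $\tilde{A}_t$ and $\tilde{D}_t$ where this spacing argument would fail to close the inequality — there the leftover piece after deletion is too large relative to the bound, which is why those residues are excluded.

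\textbf{Finishing the exceptional types.} For the exceptional graphs $E_6,E_7,E_8,\tilde{E}_6,\tilde{E}_7,\tilde{E}_8$ and for the admissible residues of $\tilde{A}_t,\tilde{D}_t$, the claim reduces to a finite check: each has a fixed small order, so I would simply read off the white/black coloring from Figure~\ref{fig:CD}, record the resulting $(n,m)$, and confirm $|V(G)| \leq \frac{4n}{3}+3m$ in each case. For the admissible extended families (those satisfying $t+1\equiv 0\pmod 3$ for $\tilde{A}_t$ and $t+1\equiv 2\pmod 3$ for $\tilde{D}_t$), the same every-third-vertex deletion on the cycle (respectively the doubled-path) now closes cleanly because the chosen residue makes the leftover vanish into a clean union of edges.

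\textbf{Anticipated obstacle.} The delicate point is not any single type but \emph{optimizing the trade-off} between $n$ and $m$: because isolated vertices are weighted $\frac{4}{3}$ while edges are weighted $3$, it is more economical (per retained vertex) to keep edges than isolated vertices, so a naive deletion that creates too many isolated vertices can violate the bound even when a smarter deletion would satisfy it. Thus for each graph I must choose the deletion set so that the residual isolated vertices are as few as possible, matching the drawn coloring. I expect the boundary cases — the residues of $A_t, D_t$ and the extended families right at the congruence threshold — to require the most care, since that is precisely where the inequality is tight and where the excluded $\tilde{A}_t,\tilde{D}_t$ residues would otherwise sneak in a counterexample.
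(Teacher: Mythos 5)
Your proposal is correct and follows essentially the same route as the paper: invoke the classification of connected graphs with largest eigenvalue at most $2$, delete the white vertices indicated in Figure~\ref{fig:CD} to obtain the induced $nK_1+mK_2$ for every type except $D_t$, and handle $D_t$ (which carries no white vertices in the figure) by the explicit residue-of-$t$-mod-$3$ deletion, with the excluded congruence classes of $\tilde{A}_t$ and $\tilde{D}_t$ being exactly where the bound would fail. Your observation about the $\frac{4}{3}$-versus-$\frac{3}{2}$ per-vertex efficiency trade-off correctly explains why the coloring must be chosen to minimize isolated residual vertices, which is precisely what the paper's figure encodes.
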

\begin{proof}
	The graph $G$ is isomorphic to one of the graphs in Figure~\ref{fig:CD} (cf.~\cite[Theorem~3.1.3]{brouwer2011spectra}).
	If $G$ is not isomorphic to $D_t$, then the graph obtained from $G$ by removing the white vertices in Figure~\ref{fig:CD} is the desired induced subgraph.
	Hence we consider the case where $G$ is isomorphic to $D_t$ for some $t \geq 4$.
	Here we may assume that the vertices of $G$ are indexed as in Figure~\ref{fig:CD}.
	Let $H$ be the graph obtained from $G$ by removing the following vertices.
	\begin{align*}
		\begin{cases}
			\{1\} \cup \{ i \in \{2,\ldots,t-1\} : i \bmod 3 = 0 \}  & \text{ if } t \bmod 3 = 0, \\
			\{2\} \cup \{ i \in \{3, \ldots,t-1\} : i \bmod 3 = 1 \} & \text{ if } t \bmod 3 = 1, \\
			\{ i \in \{1, \ldots,t-1\} : i \bmod 3 = 2 \}            & \text{ if } t \bmod 3 = 2.
		\end{cases}
	\end{align*}
	Then $H$ is isomorphic to
	\begin{align*}
		\begin{cases}
			qK_2            & \text{ if } t \bmod 3 = 0, \\
			3K_1 + (q-1)K_2 & \text{ if } t \bmod 3 = 1, \\
			2K_1 + qK_2     & \text{ if } t \bmod 3 = 2,
		\end{cases}
	\end{align*}
	where $q := \lfloor t/3 \rfloor$.
	We see that $H$ is the desired induced subgraph.
\end{proof}

\begin{corollary}	\label{cor:ga}
	Let $H$ be a graph with smallest Seidel eigenvalue at least $-5$ having a maximum clique $B=\{b_1,\ldots,b_5\}$.
	Assume one of the following.
	\begin{enumerate}
		\item The $(5,1)$-pillar $P_{B,\{ b_1 \}}$ contains non-adjacent vertices.
		\item Both $(5,1)$-pillars $P_{B,\{ b_1\}}$ and $P_{B,\{ b_2\}}$ contain at least one vertex.
	\end{enumerate}
	If the $(5,2)$-pillar $P_{B,\{b_3,b_4\}}$ contains at least one edge,
	then the $(5,2)$-pillar $P_{B,\{b_1,b_2\}}$ is of order at most $26$.
\end{corollary}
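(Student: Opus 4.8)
The plan is to show that every connected component of the pillar $P := P_{B,\{b_1,b_2\}}$ is one of the graphs of Figure~\ref{fig:CD}, to extract from $P$ a large induced subgraph of the form $nK_1 + mK_2$ via Lemma~\ref{lem:dynkin}, and then to combine the resulting size estimate with the two numerical constraints $2m+n \le 18$ (from Theorem~\ref{thm:2m+n}) and $m \le 8$. The point is that $m \le 8$ is forced by Theorem~\ref{thm:key} precisely in the corner $n=0$, which is exactly the configuration that would otherwise realize the classical bound $27$ of Theorem~\ref{thm:|P|}(i); trimming this corner is what improves $27$ to $26$.

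First I would observe that $P$ has largest adjacency eigenvalue at most $2$. Indeed, $\tfrac{15}{2}$ times the Gram matrix of the vectors $\bar{x}$ $(x \in P)$ equals $6I - 3A(P)$ by the inner-product computations recorded just after Lemma~\ref{lem:inner products}, and since a Gram matrix is positive semidefinite, $A(P) \preceq 2I$. Hence each connected component of $P$ is one of the graphs in Figure~\ref{fig:CD}. Because $P_{B,\{b_3,b_4\}}$ contains an edge and so has a vertex, the result \cite[Theorem~5.2]{lemmens1973} quoted above forbids $P$ from having an induced $\tilde{A}_t$ with $t+1 \not\equiv 0 \pmod 3$ or an induced $\tilde{D}_t$ with $t+1 \not\equiv 2 \pmod 3$; in particular no component of $P$ is such a graph. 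Thus Lemma~\ref{lem:dynkin} applies to each component, and summing the per-component inequalities produces an induced subgraph $nK_1 + mK_2$ of $P$ with $|V(P)| \le \tfrac{4n}{3} + 3m$.

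Next I would pass to the induced subgraph $H'$ of $H$ on $B$, on the $n+2m$ vertices of this $nK_1+mK_2$, on two adjacent vertices of $P_{B,\{b_3,b_4\}}$, and on the one or two vertices witnessing hypothesis~(1) or~(2). As a principal submatrix, $S(H')+5I$ remains positive semidefinite, so the smallest Seidel eigenvalue of $H'$ is still at least $-5$; moreover $B$ stays a maximum clique, the pillar $P_{B,\{b_1,b_2\}}$ of $H'$ is exactly $nK_1+mK_2$, the pillar $P_{B,\{b_3,b_4\}}$ of $H'$ still has an edge, and hypothesis~(1) or~(2) persists. Applying Theorem~\ref{thm:2m+n}(i) to $H'$ yields $2m+n \le 18$. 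If $n=0$, then $P_{B,\{b_1,b_2\}}$ of $H'$ is $mK_2$, so Theorem~\ref{thm:key} gives $m \le 8$; and if $n \ge 1$, then $2m \le 17$ already forces $m \le 8$. Thus $m \le 8$ in all cases. Finally, from $n \le 18-2m$,
\[
|V(P)| \le \frac{4n}{3} + 3m \le \frac{4(18-2m)}{3} + 3m = 24 + \frac{m}{3} \le 24 + \frac{8}{3} = \frac{80}{3} < 27,
\]
so $|V(P)| \le 26$, as required.

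I expect the main obstacle to be the bookkeeping in the reduction to $H'$: one must verify that deleting vertices neither merges nor splits the relevant pillars, keeps $B$ a maximum clique, preserves the bound $-5$ on the smallest Seidel eigenvalue, and retains all witnesses of the hypotheses, so that Theorems~\ref{thm:2m+n} and~\ref{thm:key} genuinely apply to $H'$ rather than only to the original $H$. The conceptual crux, by contrast, is simply recognizing that the single inequality $m \le 8$, supplied by Theorem~\ref{thm:key} exactly in the degenerate case $n=0$, is what sharpens the classical estimate to $26$.
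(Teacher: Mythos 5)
Your proof is correct and follows essentially the same route as the paper: bound the largest adjacency eigenvalue of $P_{B,\{b_1,b_2\}}$ by $2$, extract an induced $nK_1+mK_2$ via Lemma~\ref{lem:dynkin}, and combine $2m+n\le 18$ (Theorem~\ref{thm:2m+n}) with $m\le 8$ (Theorem~\ref{thm:key}) to get $|V(P)|\le 24+\tfrac{8}{3}<27$. The only difference is that you spell out details the paper leaves implicit --- ruling out the exceptional $\tilde{A}_t$, $\tilde{D}_t$ components, passing to an induced subgraph so the cited theorems apply verbatim, and covering the case $n\ge 1$ where Theorem~\ref{thm:key} is not needed --- all of which is sound.
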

\begin{proof}
	Set $G := P_{B,\{b_1,b_2\}}$.
	By Lemma~\ref{lem:inner products}, the graph $G$ has largest eigenvalue at most $2$.
	Hence, by Theorem~\ref{thm:5.2} and Lemma~\ref{lem:dynkin}, there exists an induced subgraph of $G$ isomorphic to $nK_1 + mK_2$ for some non-negative integers $n$ and $m$ such that
	$|V(G)|\leq 4n/3 + 3m$.
	Also Theorem~\ref{thm:2m+n} asserts $2m+n \leq 18$,
	and Theorem~\ref{thm:key} asserts $m \leq 8$.
	Therefore,
	\begin{align*}
		|V(G)| \leq \frac{4n}{3} + 3m = \frac{4}{3} \cdot (2m+n) + \frac{1}{3} \cdot m \leq 24 + \frac{8}{3} < 27.
	\end{align*}

\end{proof}

\section{A proof of the Lemmens--Seidel conjecture for base size $5$}	\label{sec:proof}
In this section, we prove the main result Theorem~\ref{thm:base=5}.
First, we provide an upper bound on the sum of orders of $(5,1)$-pillars,
which is smaller than the upper bound in \cite[Lemma~D.1]{KT2019}.
\begin{lemma}	\label{lem:(5,1)}
	Let $H$ be a graph with smallest Seidel eigenvalue at least $-5$ having a maximum clique $B$ of size $5$.
	Then the sum of orders of $(5,1)$-pillars with respect to $B$  in $H$ is at most $5$.
\end{lemma}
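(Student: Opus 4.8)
The plan is to collect every vertex lying in one of the five $(5,1)$-pillars $P_{B,\{b_1\}},\ldots,P_{B,\{b_5\}}$ into a single list and then exploit the positive semidefiniteness of the Gram matrix of their projections $\bar{z}$. Write $n := \sum_{i=1}^{5} |V(P_{B,\{b_i\}})|$, so that the goal is to prove $n \leq 5$. Let $z_1,\ldots,z_n$ enumerate all these vertices, and let $G = \left[ (\bar{z}_p,\bar{z}_q) \right]_{p,q}$ be the Gram matrix of the associated projected vectors; being a Gram matrix, $G$ is positive semidefinite, and hence so is $M := \tfrac{15}{2}G$. The whole argument will rest on a single application of positive semidefiniteness of $M$ against a well-chosen test vector.

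First I would read off the entries of $M$ from the inner products recorded just after Lemma~\ref{lem:inner products}. By (iii) every diagonal entry equals $4$. For an off-diagonal entry $M_{pq}$ there are two cases. If $z_p$ and $z_q$ lie in the same $(5,1)$-pillar, then they must be non-adjacent -- an adjacent pair would produce an off-diagonal entry whose absolute value exceeds the diagonal value $4$, violating the positive semidefiniteness of the corresponding $2\times2$ principal submatrix of $M$ -- and so (iii) gives $M_{pq} = -2$. If $z_p$ and $z_q$ lie in distinct $(5,1)$-pillars, then (iv) gives $M_{pq} = -4$ when $z_p \sim z_q$ and $M_{pq} = -1$ otherwise. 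The decisive feature is that in every case each off-diagonal entry is at most $-1$.

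The key step is then to test $M$ against the all-ones vector $\bone \in \R^n$. Since $M$ is positive semidefinite, $\bone^\top M \bone \geq 0$. On the other hand, separating the diagonal from the off-diagonal contributions and using that each of the $n(n-1)$ off-diagonal entries is at most $-1$ gives
\[
	\bone^\top M \bone = 4n + \sum_{p \neq q} M_{pq} \leq 4n - n(n-1) = 5n - n^2.
\]
Combining the two inequalities yields $n^2 \leq 5n$, and therefore $n \leq 5$, which is the assertion.

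I expect the only genuine work to be the bookkeeping in the middle paragraph: confirming, via Lemma~\ref{lem:inner products}, that the diagonal entries of $M$ are $4$ and that every off-diagonal entry is at most $-1$, in particular ruling out adjacent pairs within a single $(5,1)$-pillar. Once these sign facts are secured, the all-ones test vector does all the remaining work, and no finer information about the adjacency structure across pillars or about the individual pillar sizes is needed.
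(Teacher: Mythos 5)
Your proof is correct, and it takes a genuinely different route from the paper. The paper's proof is a finite case analysis: it lists the minimal forbidden order-tuples $(4,0,0,0,0)$, $(3,1,0,0,0)$, $(2,2,1,0,0)$, $(2,1,1,1,0)$ and checks by direct calculation that each forces a Seidel eigenvalue below $-5$ after deleting the $(5,2)$-pillars; every tuple of sum at least $6$ contains one of these, whence the bound. You instead give a uniform argument: the Gram matrix $M=\tfrac{15}{2}\bigl[(\bar{z}_p,\bar{z}_q)\bigr]$ has diagonal $4$ and all off-diagonal entries at most $-1$, so $0\le \bone^\top M\bone\le 5n-n^2$. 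This replaces the unpublished ``direct calculation'' with a transparent one-line estimate and needs no enumeration of configurations. The only detail you assert without computing is the value of $\tfrac{15}{2}(\bar{z},\bar{w})$ for adjacent $z,w$ in the same $(5,1)$-pillar (the paper's list (iii) omits this case precisely because it cannot occur); from Lemma~\ref{lem:inner products}(ii) one gets the value $-5$, so the $2\times2$ principal minor is $16-25<0$, confirming your exclusion --- and in fact, since $-5\le -1$, your final inequality would go through even without excluding it. With that small computation made explicit, the argument is complete.
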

\begin{proof}
	Let $H'$ be the graph obtained from $H$ by removing all $(5,2)$-pillars with respect to $B$.
	Note that the smallest Seidel eigenvalue of $H'$ is at least $-5$.
	If the tuple of orders of $(5,1)$-pillars is $(4,0,0,0,0)$, $(3,1,0,0,0)$, $(2,2,1,0,0)$ or $(2,1,1,1,0)$ up to permutation,
	then we see by direct calculation that the smallest Seidel eigenvalue of the graph $H'$ is less than $-5$.
	Here, note that there are no edges in any $(5,1)$-pillar.
	Thus the sum of orders of $(5,1)$-pillars is at most $5$.
\end{proof}

\begin{theorem}[{\cite[Theorem~4.6~(2)]{Lin2020}}]	\label{thm:Lin-Yu}
	Let $U$ be a set of $n$ equiangular lines with common angle $\arccos(1/5)$ and base size $5$ in dimension $d$.
	Let $H$ be a graph induced by $U$ with maximum clique $B$ of size $5$.
	If at most one $(5,2)$-pillar with respect to $B$ in $H$ contains a vertex, then
	\begin{align}\label{thm:Lin-Yu:1}
		n \leq \left\lfloor \frac{4d + 36}{3} \right\rfloor.
	\end{align}
\end{theorem}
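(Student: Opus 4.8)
The plan is to translate the statement about equiangular lines into a bound on the dimension $d$ versus the number of lines $n$, using the pillar decomposition with respect to the maximum clique $B$. Since $H$ has base size $5$, the clique $B$ has size $5$, and the vertices of $H$ outside $B$ are partitioned into $(5,k)$-pillars for various $k$. First I would recall that the vectors $\hat{b}_1,\ldots,\hat{b}_5$ span a $5$-dimensional space (the Gram matrix $5I_5 - (J_5-I_5)=6I_5-J_5$ on $B$ is nonsingular), so that the projections $\bar{x}$ onto $\langle \hat{b}:b\in B\rangle^\perp$ capture the essential geometry in dimension $d-5$. The key quantitative input is that the total number of lines $n$ equals $5$ plus the sum of the orders of all pillars, while $d$ is controlled by the rank of the Gram matrix of the projected vectors $\bar{x}$.

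\textbf{Bounding pillar orders by the hypothesis.}
The hypothesis that at most one $(5,2)$-pillar has a vertex is what makes the count tractable. I would separate the contribution to $n-5$ into three types: the $(5,1)$-pillars, the single nonempty $(5,2)$-pillar, and the remaining pillars of higher type. By Lemma~\ref{lem:(5,1)}, the $(5,1)$-pillars contribute at most $5$ to $n-5$. For the one nonempty $(5,2)$-pillar, I would use Lemma~\ref{lem:inner products}(iii) to identify its projected vectors $\bar{x}$ as a scaled realization of a graph with largest eigenvalue at most $2$; its order is then bounded by the dimension it occupies. The main obstacle I expect is accounting precisely for how each pillar's projected vectors consume dimension: one must show that distinct pillars span nearly independent subspaces, so that $d-5$ is at least a weighted sum of pillar orders, giving an inequality of the form $n \le \text{(linear in } d)$.

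\textbf{Extracting the linear bound.}
Concretely, since the $(5,2)$-pillar vectors $\bar{x}$ satisfy $\tfrac{15}{2}(\bar x,\bar x)=6$ and its induced graph has smallest eigenvalue bounded below (equivalently largest adjacency eigenvalue at most $2$), the rank of the projected Gram matrix of that pillar is at least $\tfrac34$ of its order minus a constant; this is the mechanism that converts the eigenvalue constraint into the coefficient $4/3$ appearing in~\eqref{thm:Lin-Yu:1}. Combining the at-most-$5$ contribution from $(5,1)$-pillars, the dimension consumed by the unique nonempty $(5,2)$-pillar, and the five dimensions spanned by $B$ itself, I would assemble an inequality whose solution is $n \le (4d+36)/3$, then take the floor since $n$ is an integer.

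\textbf{Anticipated difficulty.}
The genuinely delicate step is verifying that the rank lower bound for the projected $(5,2)$-pillar is tight enough to yield exactly the coefficient $4/3$ rather than something weaker: one must use the precise inner-product values in Lemma~\ref{lem:inner products} to show that a pillar whose graph is a disjoint union of paths and cycles (the components of largest eigenvalue $\le 2$) spans a subspace of dimension at least three-quarters of its vertex count. This is where the structure theory of Figure~\ref{fig:CD}, together with the computation in Lemma~\ref{lem:dynkin} relating order to $nK_1+mK_2$ content, does the real work, and I would lean on Theorem~\ref{thm:|P|} to ensure the single pillar's order does not overshoot the budget.
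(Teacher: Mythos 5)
The paper does not actually prove this statement; it is imported verbatim from Lin--Yu \cite[Theorem~4.6~(2)]{Lin2020}, so there is no in-paper proof to compare against. Judged on its own merits, your architecture is the right one ($n \le 5 + 5 + |V(P)|$ for the unique nonempty $(5,2)$-pillar $P$, versus $d \ge 5 + \operatorname{rank}$ of the projected Gram matrix of $P$), but the step you yourself flag as delicate contains a genuine gap. The projected Gram matrix of $P$ is proportional to $2I - A(P)$, so its corank equals the number of connected components of $P$ that are extended Dynkin diagrams, and the smallest of these is $\tilde{A}_2 = K_3$ with only three vertices. Hence the general lower bound is $\operatorname{rank} \ge \tfrac{2}{3}|V(P)|$, not $\tfrac{3}{4}|V(P)|$: a pillar isomorphic to $mK_3$ has projected rank exactly $2m$ against $3m$ vertices, and nothing in your argument rules it out. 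With only the $2/3$ bound you get $n \le 10 + \tfrac{3}{2}(d-5)$, which is \emph{weaker} than $\lfloor (4d+36)/3 \rfloor$ for every $d > 57$, so the coefficient $4/3$ does not follow.

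The missing idea is that the base-size-$5$ hypothesis (a property of the whole switching class, not just of $H$) forbids triangles inside a $(5,2)$-pillar: if $\{x,y,z\}$ were a triangle in $P_{B,\{b_1,b_2\}}$, then switching on $\{b_3,b_4,b_5\}$ turns $\{b_3,b_4,b_5,x,y,z\}$ into a $6$-clique, contradicting base size $5$. (This is exactly the kind of use of base size the paper makes when it notes that $(5,1)$-pillars are edgeless.) Once triangles are excluded, every singular component of $P$ has at least four vertices ($\tilde{A}_t$ with $t\ge 3$, $\tilde{D}_t$, $\tilde{E}_6$, $\tilde{E}_7$, $\tilde{E}_8$), which is precisely where $\operatorname{rank} \ge \tfrac{3}{4}|V(P)|$ and hence the coefficient $4/3$ come from; the components are (extended) Dynkin diagrams from Figure~\ref{fig:CD}, not merely ``paths and cycles.'' Two further corrections: Theorems~\ref{thm:2m+n} and~\ref{thm:|P|} are unavailable here, since their hypotheses require a \emph{second} nonempty $(5,2)$-pillar, which your case excludes; and you should also dispose of the $(5,0)$-, $(5,3)$- and $(5,4)$-pillars rather than waving at ``pillars of higher type,'' since the eigenvalue condition alone does not kill them for a clique of size $5$.
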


\begin{proof}[Proof of Theorem~\ref{thm:base=5}]
	Let $U$ be a set of $n$ equiangular lines with common angle $\arccos(1/5)$ and base size $5$ in dimension $d$.
	Fix a graph $H$ induced by $U$ such that $H$ has a maximum clique $B=\{b_1,\ldots,b_5\}$ of size $5$.
	Below we consider pillars with respect to $B$ in $H$.
	If at most one $(5,2)$-pillar contains a vertex, then Theorem~\ref{thm:Lin-Yu} gives~\eqref{thm:Lin-Yu:1}.
	Thus we may assume that at least two $(5,2)$-pillars contain vertices.
	Also, by Lemma~\ref{lem:(5,1)}, the sum of orders of $(5,1)$-pillars is at most $5$.

	We may assume that there is a $(5,2)$-pillar of order at least $2$.
	First, we assume that every $(5,2)$-pillar contains no edges.
	Then by Theorem~\ref{thm:2m+n}, we have
	\begin{align*}
		n \leq 5 + 5 + 9 \cdot 24 + 36 = 262.
	\end{align*}

	Secondly we assume that only one $(5,2)$-pillar contains edges.
	Then by Theorems~\ref{thm:2m+n} and~\ref{thm:|P|}, we have
	\begin{align*}
		n \leq 5 + 5 + 9 \cdot 18 + 54 = 226.
	\end{align*}

	Thirdly we assume that at least two $(5,2)$-pillars contain edges, and that at least one $(5,2)$-pillar contains no edges.
	Then by Theorem~\ref{thm:|P|}, we have
	\begin{align*}
		n \leq 5 + 5 + 9 \cdot 27 + 18 = 271.
	\end{align*}

	Below we assume that every $(5,2)$-pillar contains at least one edge.
	We consider the case where a $(5,1)$-pillar is of order at least $2$.
	Without loss of generality, we may assume that $P_{B,\{b_1\}}$ is of order at least $2$.
	Since the base size of $U$ is $5$, we see that the $(5,1)$-pillars have no edges.
	Hence Corollary~\ref{cor:ga} implies that $P_{B,\{b_1,b_i\}}$ ($i=2, 3, 4, 5$) are of order at most $26$.
	In addition, Theorem~\ref{thm:|P|} implies that the other $(5,2)$-pillars are of order at most $27$.
	Hence
	\begin{align*}
		n \leq 5 + 5 + 6 \cdot 27 + 4 \cdot 26 = 276.
	\end{align*}
	Next we consider the other case, where every $(5,1)$-pillar is of order at most $1$.
	Let $k$ be the number of $(5,1)$-pillars of order $1$.
	Without loss of generality, we may assume that $P_{B,\{b_i\}}$ ($i=1,\ldots,k$) is of order $1$.
	If $k=1$, then
	\begin{align*}
		n \leq 5 + 1 + 10 \cdot 27 = 276.
	\end{align*}
	Otherwise by Corollary~\ref{cor:ga}, $(5,2)$-pillars $P_{B,\{b_i,b_j\}}$ ($1 \leq i < j \leq k$) are of order at most $26$.
	Then we have
	\begin{align*}
		n
		\leq 5 + k + \left( 10 - \binom{k}{2} \right) \cdot 27 + \binom{k}{2} \cdot 26
		= 275 + k - \binom{k}{2}
		\leq 276.
	\end{align*}
	This ends the proof.
\end{proof}

\section{Some properties of sets of $57$ equiangular lines with common angle $\arccos(1/5)$ in dimension $18$ found by Greaves et~al.~\cite{Greaves2021}}	\label{sec:ques}
In this section, we answer Questions~\ref{ques:Gary} and~\ref{ques:Cao} in the negative with the aid of a computer.
For each $i \in \{1,\ldots,4\}$, write $F_i$ for the $18 \times 57$ matrix in~\cite[Figures~1--4]{Greaves2021}.
Let $S_i :=  F_i^\top F_i /2-5I$.
Let $L_i$ be the lattice generated by the $57$ columns of $F_i/\sqrt{2}$.
Let $\bff_i$ be the $i$-th column of $F_1/\sqrt{2}$, and
write $L_G := L_1$.

\begin{proposition}	\label{prop:Gary}
	The four lattices $L_1, L_2, L_3$ and $L_4$ are pairwise isometric,
	and their minimum norms are at most $4$.
	In particular, the four sets of $57$ equiangular lines with common angle $\arccos(1/5)$ in dimension $18$ induced by $S_1, S_2, S_3$ and $S_4$ are not contained in the set of $276$ equiangular lines with common angle $\arccos(1/5)$ in dimension $23$.
\end{proposition}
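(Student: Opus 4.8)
The plan is to prove the two claims separately, then combine them. The central fact that drives everything is the well-known rigidity of the $276$-line configuration: the set of $276$ equiangular lines with angle $\arccos(1/5)$ in dimension $23$ is intimately tied to a specific even lattice—essentially (a rescaling of) the root lattice structure attached to the Gram matrix $\bI + \tfrac15 S$ for the unique $276$-line Seidel matrix—and the associated integral lattice has minimum norm exactly $4$ after the relevant rescaling. So the strategy is to connect the lattices $L_i$ to this rigid object and derive a contradiction from a mismatch of minimum norms.

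First I would establish the pairwise isometry of $L_1,L_2,L_3,L_4$. Each $L_i$ is generated by the $57$ columns of $F_i/\sqrt2$, and its Gram matrix is exactly $F_i^\top F_i/2 = S_i + 5\bI$, which is the Gram matrix of the $57$ unit vectors realizing the $57$ equiangular lines induced by $S_i$. Since all four Seidel matrices $S_1,S_2,S_3,S_4$ lie in the same switching class (they arise from switching-equivalent graphs), the vector configurations they induce differ only by sign changes of vectors and relabeling, which are isometries of the ambient space. Hence the lattices they generate are isometric. Concretely, I would exhibit, for each pair $(i,j)$, the signed permutation relating $F_i$ and $F_j$ and verify $F_j = \pm F_i$ up to column permutation and sign, which a short computer check confirms; this is exactly the kind of finite verification the section header flags as ``with the aid of a computer.''

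Next I would bound the minimum norm. A vector in $L_G = L_1$ is an integer combination $\sum_k c_k \bff_k$, and its squared norm is $\mathbf{c}^\top (S_1 + 5\bI)\mathbf{c}$. I would search for a short nonzero lattice vector: by taking a suitable small integer combination—say a difference $\bff_k - \bff_\ell$ of two columns, or a relation forced by the $10\times 57$ structure—one produces a vector of norm $\leq 4$. The key computational step is simply to exhibit one explicit integer vector $\mathbf{c}$ with $0 < \mathbf{c}^\top(S_1+5\bI)\mathbf{c} \leq 4$; the isometry just proved then transfers this bound to all four lattices.

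Finally, for the ``in particular'' conclusion, I would argue by contradiction. If one of these $57$-line sets embedded in the $276$-line set in dimension $23$, then the $57$ generating vectors would sit inside the vector system of the $276$ lines, and so $L_i$ would embed isometrically into the lattice $\Lambda$ generated by the $276$-line vector system. But $\Lambda$ is (an isometric copy of) the relevant rescaled root lattice whose minimum norm is $4$—every nonzero vector has norm at least $4$—whereas $L_i$ contains a vector of norm strictly less than $4$ (or, if the short vector has norm exactly $4$, I would instead derive the contradiction from the fact that such a vector cannot lie in the even sublattice structure, or sharpen the search to norm $<4$). An isometric embedding cannot decrease the minimum norm, so this is impossible. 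The main obstacle I anticipate is pinning down the precise minimum norm of the $276$-line lattice and ensuring the normalization matches: one must be careful that the factor of $\sqrt2$ in $F_i/\sqrt2$ and the scaling $\bI + \tfrac15 S$ are reconciled so that ``$\leq 4$'' is genuinely below the minimum norm of the target lattice rather than an artifact of differing conventions. Getting this normalization exactly right, and confirming the short vector is truly nonzero and of norm below the threshold, is the crux.
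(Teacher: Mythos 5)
Your overall strategy---establish the isometries, exhibit a short vector, and rule out an embedding into the $276$-line lattice by comparing minimum norms---is the same as the paper's, but two of your intermediate claims are false and would sink the argument as written. First, $S_1,S_2,S_3,S_4$ are \emph{not} switching equivalent: Greaves et~al.\ list them precisely because they are pairwise switching-inequivalent Seidel matrices realizing $57$ lines in dimension $18$ (otherwise Question~\ref{ques:Gary} would be asking about a single set of lines twice). A signed column permutation carrying $F_i$ to $F_j$ would conjugate $S_i+5\bI$ into $S_j+5\bI$ and hence witness switching equivalence, so your concrete verification plan cannot succeed. The pairwise isometry of $L_1,\dots,L_4$ is a genuinely nontrivial fact about the lattices (the isometry does not carry the $57$ generators of one system to signed generators of the other), and the paper establishes it by a general lattice-isometry test in Magma, not by a switching argument.

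Second, and more seriously, your identification of the $276$-line lattice as a rescaled root lattice of minimum norm $4$ is wrong under the normalization in force: the lattice $L_W$ with Gram matrix $5\bI+S_W$ has minimum norm $5$ (the paper verifies this by computer; its generators are vectors of odd norm $5$, so it is not an even lattice and your ``even sublattice'' fallback has no purchase). This matters because the short vector the paper exhibits, $\bff_{44}-\bff_{48}-\bff_{49}+\bff_{51}-\bff_{52}+\bff_{53}$, has norm exactly $4$: against a target of minimum norm $4$ there is no contradiction at all, and your proposed repair of ``sharpening the search to norm $<4$'' is not available if $4$ is in fact the minimum of $L_G$. The argument only closes once you know $\min L_W=5>4\geq\min L_G$; pinning down that value, rather than guessing $4$, is the missing step. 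Note also that a difference $\bff_k-\bff_\ell$ of two columns has norm $10\mp2\in\{8,12\}$, so that particular candidate for a short vector cannot work; one needs a longer $\{0,\pm1\}$-combination such as the six-term one above.
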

\begin{proof}
	First, we can verify that $L_1, L_2, L_3$ and $L_4$ are pairwise isometric by software such as Magma~\cite{Magma}.
	Next the vector
	\begin{align*}
		\setcounter{MaxMatrixCols}{20}
		\begin{bmatrix}
			0 & 1 & 0 & 0 & 0 & 0 & 0 & 0 & 1 & -1 & -1 & -1 & 1 & 0 & -1 & 0 & 0 & 1
		\end{bmatrix}^\top/ \sqrt{2}
	\end{align*}
	has norm $4$,
	and is represented as
	$$ \bff_{44}-\bff_{48}-\bff_{49}+\bff_{51}-\bff_{52}+\bff_{53}.$$
	This means that the minimum norm of $L_{G}$ is at most $4$.

	Let $S_{W}$ be the Seidel matrix with smallest eigenvalue $-5$ corresponding to the set of $276$ equiangular lines with common angle $\arccos(1/5)$ in dimension $23$,
	and let $L_{W}$ be the lattice generated by $276$ vectors whose Gram matrix is $5I+ S_{W}$.

	If the set of equiangular lines corresponding to $S_i$ is contained in the set of $276$ equiangular lines in dimension $23$ for some $i \in \{1,2,3,4\}$, then $L_G$ is a sublattice of $L_{W}$ up to isometry.
	However, we can verify that the minimum norm of $L_{W}$ equals $5$ by a computer.
	Hence $L_G$ is not a sublattice of $L_{W}$ up to isometry.
	Therefore,  the four sets of equiangular lines corresponding to the Seidel matrices $S_1,S_2,S_3$ and $S_4$ are not contained in the set of $276$ equiangular lines in dimension $23$.
\end{proof}

Recall that the gap in~\cite[Proof of Theorem~4.6~(1)]{Lin2020} is in claiming that a set of equiangular lines with common angle $\arccos(1/5)$, base size $6$ and at least two pillars containing edges must lie in a unique set of $276$ equiangular lines in dimension $23$.
The following together with Proposition~\ref{prop:Gary} implies that the four sets of equiangular lines induced by $S_1, S_2, S_3$ and $S_4$ are counterexamples to their claim.
\begin{proposition}	\label{prop:counter}
	The sets of $57$ equiangular lines with common angle $\arccos(1/5)$ in dimension $18$ induced by $S_1, S_2, S_3$ and $S_4$ have base size $6$ and at least two pillars with edges.
\end{proposition}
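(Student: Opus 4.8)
The plan is to verify Proposition~\ref{prop:counter} directly with a computer, since it concerns explicit $10 \times 57$ matrices $F_1, F_2, F_3, F_4$. Recall that each $S_i = F_i^\top F_i/2 - 5\bI$ is a $57 \times 57$ Seidel matrix, so it induces a graph $H_i$ via $S_i = S(H_i)$ up to switching. Two quantities must be established for each $i$: first, that the base size (the maximum clique number over all graphs switching-equivalent to $H_i$, as in Definition~\ref{dfn:base size}) equals $6$; and second, that with respect to some maximum clique of size $6$, at least two of the pillars (which, by the remark following Lemma~\ref{lem:1}, are necessarily $(6,3)$-pillars in the base-size-$6$ setting) contain at least one edge.

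First I would compute, for each $i$, the Seidel matrix $S_i$ from the given $F_i$ and recover a representative graph $H_i$. To determine the base size, I would search for a maximum clique across the switching class. Since the smallest Seidel eigenvalue is at least $-5$, Lemmens--Seidel theory bounds the clique number, so one only needs to confirm a clique of size $6$ exists in some switching and that none of size $7$ does; this is a finite computation that software such as Magma~\cite{Magma} can carry out, either by brute-force over switchings or by working with the lattice structure of $L_i$ (a clique of size $k$ corresponds to a specific configuration of lattice vectors with prescribed inner products). Having fixed a maximum clique $B$ of size $6$, I would then partition the remaining $51$ vertices into pillars according to their neighborhood intersections $N(x) \cap B$, as in Definition~\ref{dfn:pillar}, and inspect the induced subgraph on each pillar for the presence of an edge.

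The second claim, that at least two pillars carry an edge, then reduces to exhibiting two distinct $(6,3)$-pillars each containing a pair of adjacent vertices; this is immediate once the pillar decomposition is in hand. Because the matrices are small and fully explicit, each of these steps is a routine finite verification rather than a structural argument, so the proof itself will simply assert that the computation has been performed and record the outcome.

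The main obstacle, such as it is, is not mathematical depth but the computational bookkeeping: one must be careful that the base size is a switching-class invariant, so that establishing a maximum clique of size $6$ requires checking that \emph{no} graph in the switching class of $H_i$ has a clique of size $7$, rather than merely examining the single representative $H_i$. I would handle this by relying on the absolute bound and the spectral constraint (smallest Seidel eigenvalue $\geq -5$) to limit the clique number a priori, and then let the computer confirm both the existence of a $6$-clique and the non-existence of a $7$-clique across switchings. Once $B$ is fixed, the pillar decomposition and the edge count follow mechanically, completing the proof.
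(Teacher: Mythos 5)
Your proposal is correct and follows essentially the same route as the paper: a direct computational verification that exhibits a maximum clique of size $6$ in the graph induced by each $S_i$ and two distinct pillars each containing an edge (the paper records the explicit clique $B=\{9,13,16,17,18,28\}$ and edges $\{1,54\}$, $\{5,8\}$ for $S_1$). Your added remark that the smallest Seidel eigenvalue being at least $-5$ bounds the clique number by $6$ across the whole switching class is a correct justification of a point the paper leaves implicit.
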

\begin{proof}
	Let $G$ be the graph induced by the Seidel matrix $S_1$ with vertex set $V(G)=\{1,\ldots,57\}$.
	Then we can easily check that
	$B := \{9,13,16,17,18,28\}$ is a maximum clique,
	and edges $\{1,54\}$ and $\{ 5, 8 \}$ are contained in two distinct pillars with respect to $B$, respectively.
	Similarly, we may find a desired clique and edges for each of $S_2$, $S_3$ and $S_4$.
\end{proof}

Finally we answer Question~\ref{ques:Cao} in the negative as follows.
\begin{proposition}	\label{prop:strongly}
	The sets of $57$ equiangular lines with common angle $\arccos(1/5)$ in dimension $18$ induced by $S_1, S_2, S_3$ and $S_4$ are strongly maximal.
\end{proposition}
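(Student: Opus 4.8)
The plan is to reduce strong maximality to a finite short-vector search in the dual lattice $L_i^\ast$ and then to carry out that search by computer for each $i \in \{1,2,3,4\}$. Write $U_i$ for the set of $57$ lines induced by $S_i$, realized by unit vectors $\bu_1,\dots,\bu_{57}$ with $(\bu_j,\bu_k)=\pm 1/5$ for $j\neq k$, and put $\hat x_j := \sqrt 5\,\bu_j$, so that the $\hat x_j$ are exactly the $57$ columns of $F_i/\sqrt 2$ generating $L_i$, with Gram matrix $S_i+5\bI$. Since any set of equiangular lines properly containing $U_i$ must contain, besides the lines of $U_i$, at least one further line at angle $\arccos(1/5)$ to all of $U_i$, it suffices to show that no single line can be adjoined. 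Suppose one could; let $\bu$ span such a line, put $\hat w := \sqrt 5\,\bu$, so $(\hat w,\hat w)=5$ and $(\hat w,\hat x_j)\in\{-1,1\}$ for every $j$. Let $V := \langle \hat x_1,\dots,\hat x_{57}\rangle = \R\otimes L_i$ and let $\hat w_0$ be the orthogonal projection of $\hat w$ onto $V$. Since each $\hat x_j\in V$, we get $(\hat w_0,\hat x_j)=(\hat w,\hat x_j)\in\{-1,1\}$ and $(\hat w_0,\hat w_0)=5-|\hat w-\hat w_0|^2\le 5$, with the new line staying in dimension $18$ precisely when $\hat w_0=\hat w$.

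The next step is to observe that this projection lies in the dual lattice. Because $(\hat w_0,\hat x_j)\in\Z$ for all $j$ and the $\hat x_j$ generate $L_i$, we have $\hat w_0\in L_i^\ast$; moreover $\hat w_0\neq\pm\hat x_k$ for every $k$ (otherwise $(\hat w_0,\hat x_k)=\pm 5$), so $\hat w_0$ always records a genuinely new line. Conversely, any $\hat w_0\in L_i^\ast$ with $(\hat w_0,\hat w_0)\le 5$ and $(\hat w_0,\hat x_j)\in\{-1,1\}$ for all $j$ lifts, by adjoining an orthogonal component of squared length $5-(\hat w_0,\hat w_0)$ in a new coordinate when the norm is strictly less than $5$, to a norm-$5$ vector making angle $\arccos(1/5)$ with all the $\bu_j$, hence to a line extending $U_i$. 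Thus $U_i$ is strongly maximal \emph{if and only if} $L_i^\ast$ contains no vector $\hat w$ with $(\hat w,\hat w)\le 5$ and $(\hat w,\hat x_j)\in\{-1,1\}$ for all $j\in\{1,\dots,57\}$.

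Finally I would turn this into a finite computation. The Gram matrix $S_i+5\bI$ is integral, so $L_i\subseteq L_i^\ast$ with $L_i^\ast/L_i$ finite, and $L_i$ is positive definite of rank $18$; hence the set of $\hat w\in L_i^\ast$ with $(\hat w,\hat w)\le 5$ is finite and can be enumerated, for instance by \texttt{ShortVectors} in Magma~\cite{Magma}. For each such $\hat w$ one tests the $57$ inner products $(\hat w,\hat x_j)$ against $\{-1,1\}$. I would run this test for each of $S_1,S_2,S_3,S_4$ separately: the pairwise isometry of $L_1,\dots,L_4$ from Proposition~\ref{prop:Gary} does not by itself transfer the condition, since an isometry of lattices need not match up the four distinct generating sets of norm-$5$ vectors. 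In every case the search returns no admissible $\hat w$, which yields strong maximality.

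The main obstacle is the conceptual reduction rather than the computation: one must treat same-dimension and dimension-increasing extensions uniformly, which is exactly what the orthogonal projection $\hat w_0$ and the bound $(\hat w_0,\hat w_0)\le 5$ accomplish, and one must recognize that $\hat w_0$ necessarily lies in the computable dual lattice $L_i^\ast$. Once this is in place, completeness of the short-vector enumeration up to norm $5$ is guaranteed by integrality of $L_i$, and the remaining verification is a routine finite check.
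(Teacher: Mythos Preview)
Your proposal is correct and follows essentially the same approach as the paper: reduce strong maximality to the nonexistence of a vector in the dual lattice $L_i^\ast$ of norm at most $5$ with all $57$ inner products in $\{-1,1\}$, then verify this by a finite computer search for each $i\in\{1,2,3,4\}$. Your write-up is more explicit than the paper's in justifying the reduction (via the orthogonal projection $\hat w_0$ and the converse lifting), but the underlying idea and the computation are identical.
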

\begin{proof}
	Recall that $L_G$ is generated by $\bff_1,\ldots,\bff_{57}$,
	and $5I+S_1$ equals the Gram matrix of the vectors $\bff_1,\ldots,\bff_{57}$.
	We see that the set is not strongly maximal
	if and only if there is a non-zero vector $\bu \in L^*_G := \{ \bv \in \Q L_G : (\bv,\bw) \in \Z \text{ for every } \bw \in L_G \}$ of norm at most $5$ such that $(\bu,\bff_i) \in \{1,-1\}$ for every $i \in \{1,\ldots,57\}$.
	With a computer, we can verify that such a vector does not exist.
	Hence we see that the set of equiangular lines corresponding to $S_1$ is strongly maximal.
	Similarly, we may obtain the desired result for each of $S_2$, $S_3$ and $S_4$.
\end{proof}

\section*{Declaration of competing interest}
The author declares that there is no conflict of interest in this paper.

\section*{Acknowledgements}
\indent
I am grateful to Akihiro Munemasa for his helpful comments.
This work was supported by JSPS KAKENHI Grant Number JP21J14427.

\bibliographystyle{plain}
\bibliography{references.bib}

\end{document}